\newcommand{\lra}{\mathop{\longrightarrow}\limits}
\newcommand{\R}{{\mathrm R}}
\def\undertilde#1{\mathord{\vtop{\ialign{##\crcr
				$\hfil\displaystyle{#1}\hfil$\crcr\noalign{\kern1.5pt\nointerlineskip}
				$\hfil\tilde{}\hfil$\crcr\noalign{\kern1.5pt}}}}}
\newtheorem{corollary}{Corollary}
\newtheorem{lemma}{Lemma}
\newtheorem{definition}{Definition}
\newtheorem{remark}{Remark}
\newtheorem{theo}{Theorem}
 \renewcommand{\det}{\mbox{det}}
\newcommand{\beq}{\begin{equation}}
\newcommand{\eeq}{\end{equation}}
	 \renewcommand{\emph}[1]{\textit{#1}}
\title{A robust Lyapunov criterion for non-oscillatory behaviors in biological interaction networks}
\author{  David Angeli, M. Ali Al-Radhawi and  Eduardo D. Sontag 
\thanks{D. Angeli is with the  Department of Electrical \& Electronic Engineering, Imperial College London, London SW7 2AZ, UK. He is also with Dipartimento di Ingegneria dell'Informazione, University of Florence, Florence, Italy. Email: \texttt{d.angeli@imperial.ac.uk}}
\thanks{M. Ali Al-Radhawi and E. D. Sontag are with Departments of Electrical and Computer Engineering and of Bioengineering, Northeastern University, Boston, MA 02115, USA.  E. D. Sontag is also an affiliate of the Departments of Chemical Engineering and Mathematics at Northeastern University, and he is with  the Laboratory of Systems Pharmacology, Harvard Medical School, Boston, MA 02115, USA. Emails:  \texttt{malirdwi@northeastern.edu}, \texttt{e.sontag@northeastern.edu}.} 
\thanks{This work was partially supported by grants ONR N00014-21-1-2431 and AFOSR FA9550-21-1-0289.}}
\begin{document}
 	\maketitle
 	\begin{abstract}
 		We introduce the notion of non-oscillation, propose a constructive method for its robust verification, and study its application to biological interaction networks (also known as, chemical reaction networks). We begin by revisiting Muldowney's result on the non-existence of periodic solutions based on the study of the variational system of the second additive compound of the Jacobian of a nonlinear system. We show that exponential stability of the latter rules out limit cycles, quasi-periodic solutions, and broad classes of oscillatory behavior. We focus then on nonlinear equations arising in biological interaction networks with general kinetics, and we show that  the dynamics of the aforementioned variational system can be embedded in a linear differential inclusion. We then propose algorithms for constructing piecewise linear Lyapunov functions to certify global robust  non-oscillatory behavior. Finally, we apply our techniques to study several regulated enzymatic cycles where available methods are not able to provide any information about their qualitative global behavior.  \\
 		\noindent \emph{Keywords}: second additive compounds, robust non-oscillation, piecewise linear Lyapunov functions, biological interaction networks, enzymatic cycles.
 	\end{abstract}

 	\section{Introduction}
 	Natural and engineered nonlinear systems are commonly required to operate consistently and robustly under perturbations and a variety of  environmental conditions. Rational analysis and synthesis of such systems need qualitative characterizations of  their  global long-term behavior, which is a notoriously difficult task for general nonlinear systems.  This problem is compounded by the large uncertainties that pervade the mathematical descriptions of many such systems. A prominent class  exemplifying these difficulties are  biological interaction networks, which include molecular processes such as expression and decay of proteins, metabolic networks, regulation of transcription and translation, and signal transduction \cite{alon}. Such networks are usually described via the mathematical formalism of Biological Interaction Networks (BINs) (also known as Chemical Reaction Networks (CRNs)) \cite{erdi}. Ordinary Differential Equations (ODE) descriptions of BINs have two components, one  graphical and one kinetic. The first is often well-characterized as it corresponds to the list of reactions, while the latter (which includes kinetic constants and the functional forms of kinetics) is not, as it depends on quantifying the ``speed'' of reactions which is difficult to measure and subject to environmental changes. This information disparity precludes the construction of full mathematical models, and hence a pressing need has emerged  for the development of general \emph{robust} techniques that can provide conclusions on the qualitative behavior of the network based on the graphical information only \cite{bailey01}.  
 	
 	Although this problem may seem intractable, significant progress has been made in the past few decades. A pioneering example has been the development of the theory of complex-balanced networks with Mass-Action kinetics, and the associated deficiency-based characterizations \cite{horn72,feinberg87}. It has been shown that such networks always admit Lyapunov functions over the positive orthant, and that global stability can be ascertained in some cases \cite{sontag01,anderson}.  Other notions of global behavior have also been  considered in the literature. It has been shown that the \emph{persistence} of a class of BINs can be certified via simple graphical conditions \cite{persistence}. The \emph{monotonicity} of certain BINs can be established in reaction coordinates, and this property has been used to show global convergence to attractors \cite{angeli10}.  More recently, new techniques have been developed for certifying global stability by the construction of Robust Lyapunov Functions (RLFs) in reaction coordinates \cite{MA_cdc13,MA_TAC,plos} and concentration coordinates \cite{MA_cdc14,plos,blanchini,blanchini2}. These techniques have been developed into a comprehensive framework with relatively wide applicability to various key biochemical networks like transcriptional networks, post-translational modification cascades, signal transduction, etc \cite{plos}.
 	
 	Despite recent advances, many relevant networks, and many dynamic behaviors,  remain outside the scope of analysis through available methods. In this paper, we study oscillations in dynamical systems with particular emphasis on BINs. Unlike earlier works which studied conditions for the emergence of oscillations  in various physical contexts \cite{angeliosc08,elwakil10}, we propose to study  another global qualitative notion, which we call \emph{non-oscillation}, by examining   the variational system of the second additive compound of the Jacobian of a nonlinear system. This approach was originally introduced in order to rule out periodic solutions by Muldowney \cite{muldowney} (see also  \cite{margaliot}, where the approach has recently been reframed in the context of $k$-Order Contraction Theory), and it has been applied to the study of epidemic models \cite{li95}, circadian rhythms \cite{wang10}, and, most remarkably, as a local analysis tool, \cite{pantea14}, to rule out Hopf bifurcations in BINs. We begin by revisiting Muldowney's results. We will show that exponential stability of the aforementioned variational system guarantees that the area measure of all bidimensional compact surfaces asymptotically converges to zero. It turns out, as a consequence, that the same will be true of the $k$th-hypervolume measure for arbitrary $k$-dimensional submanifolds for any $k\ge 2$. This allows us to exclude limit cycles, invariant torii, (asymptotically) quasi-periodic solutions, and many types of oscillatory behavior. We then show that this notion can be verified successfully for  classes of BINs where no other technique has proved useful.  We will achieve this goal by embedding the dynamics of the second additive compounds of a BIN in a linear differential inclusion,  and then generalize the RLF approach to be applied to this LDI. We will show that the existence of such an RLF will guarantee robust non-oscillation by establishing a LaSalle-like condition.  %
 	
 	Although robust non-oscillation is technically weaker than global stability, coupling it with local asymptotic stability is  \emph{nearly as good} as it places robust and strong constraints on the  {range} of possible behaviors of a given network. Furthermore, this new notion is also compatible with multi-stability and almost global stability \cite{angeli04,efimov12}, which opens the door for applications to systems with multiple attractors.

 	\subsection{Motivating example: regulation of the enzymatic cycle }
 	\begin{figure}[t!]
 		\centering
 		\subfigure[]{\includegraphics[height=2in]{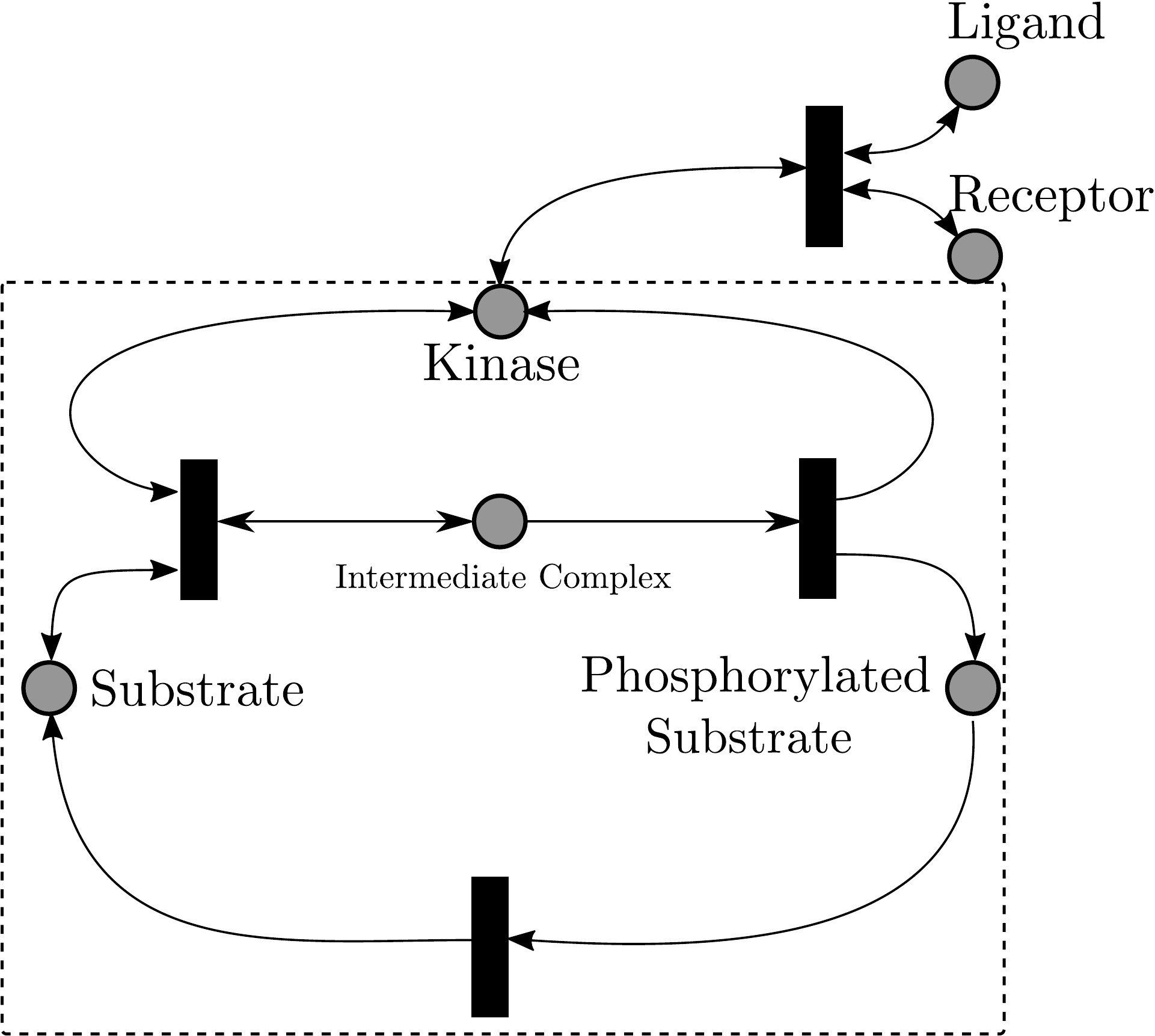}}
 		\subfigure[]{\includegraphics[width=2in]{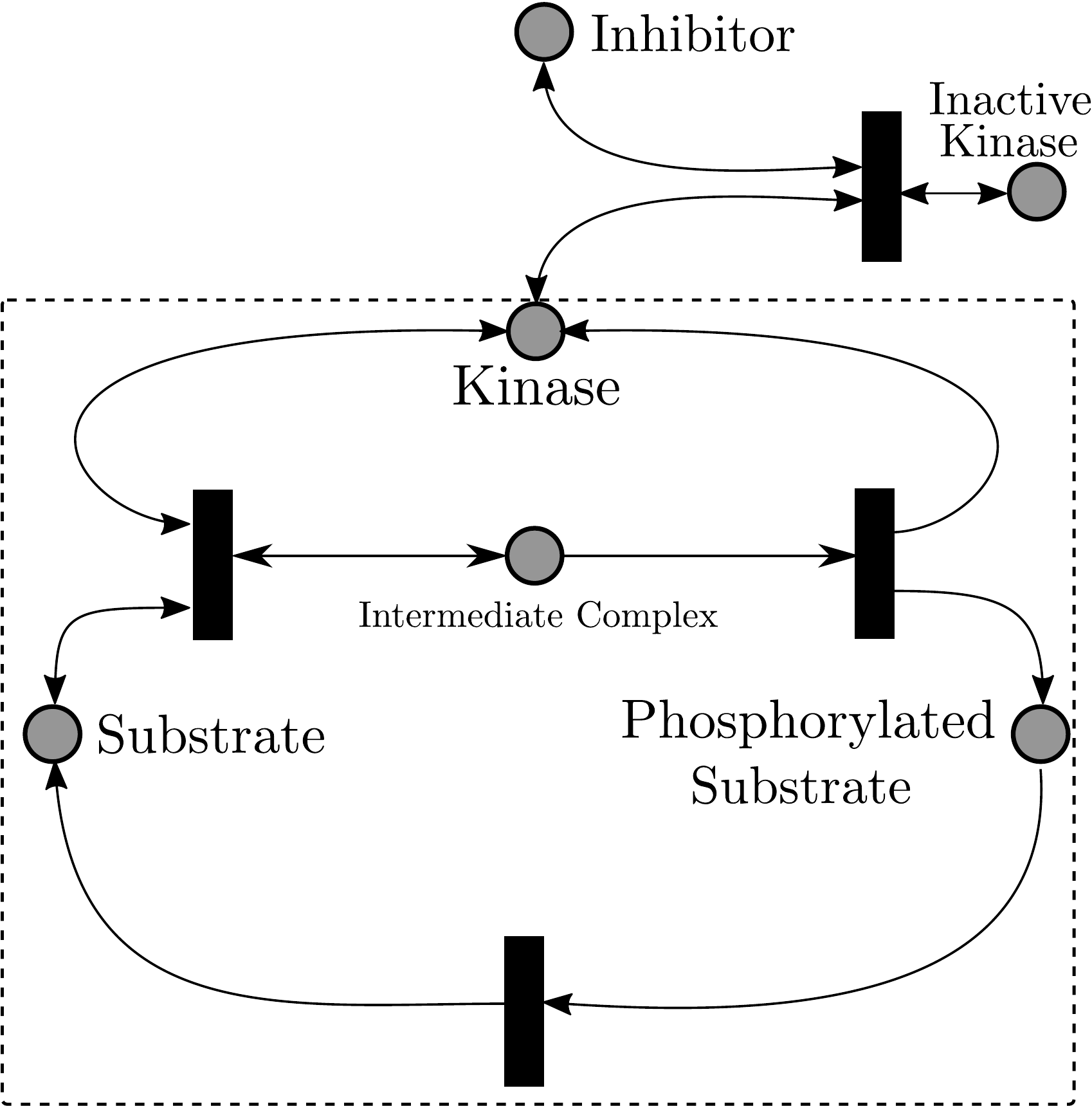}}
 		\subfigure[]{\includegraphics[width=2in]{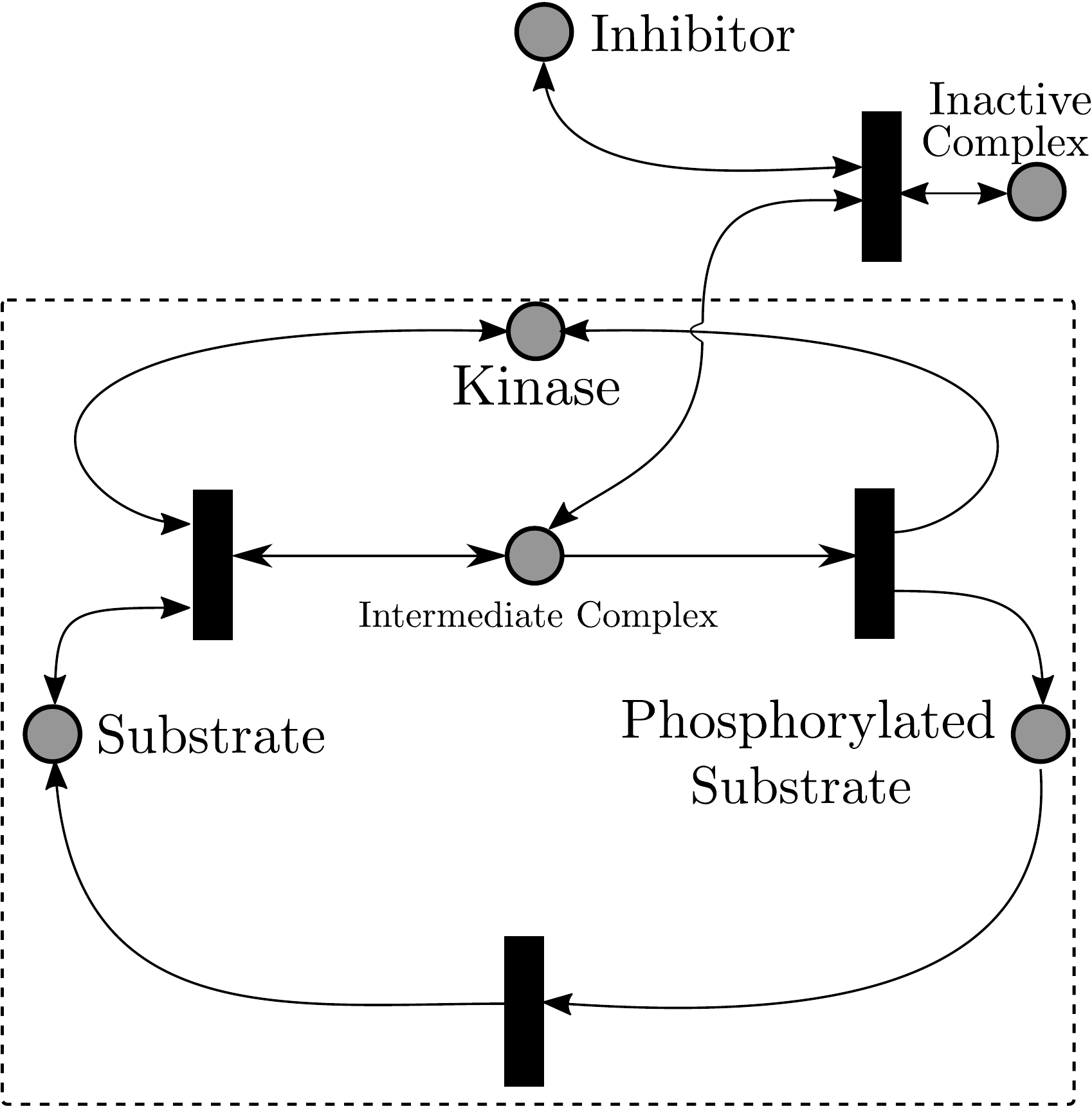}}
 		\caption{\textbf{Various architectures for regulating the PTM cycle}. (a) The kinase is only activated if a ligand binds to a receptor, (b) The kinase gets deactivated after binding to an inhibitor, (c) The substrate-kinase intermediate complex gets sequestered by an inhibitor. The Petri-net \cite{petri08} notation is used where a circle denotes a species, and a rectangle denotes a reaction.  }
 		\label{f.ptm}
 	\end{figure}
 	We describe an open problem which is highly relevant to systems biology. It involves regulation mechanisms of  the Post-Translational Modification (PTM) cycle which is a very common motif in signal transduction \cite{vecchio_murray}. For example, an enzyme known  as a kinase ($K$) binds to a substrate ($S$) to form an intermediate complex ($C$). Then, the substrate is phosphorylated to produce an activated substrate ($P$).  The activated substrate decays back to its inactive form ($S$).  The network is depicted inside the dashed rectangle in Figure \ref{f.ptm}-a)-c), and it can be written as follows:
 	\begin{equation}\label{ptm}
 		S+K \rightleftharpoons C \longrightarrow P+K, \ P \longrightarrow S. 
 	\end{equation}
 	The dynamics of the above network has been analyzed using a Piecewise Linear (PWL) RLF. In particular, it has been shown that it always admits a positive globally asymptotic stable steady state, for any choice of monotone kinetics \cite{MA_TAC,plos}. 
 	
 	However, small structural changes in the network can make a PWL RLF fail to exist. We study various ways of regulating the activity of the cycle as depicted in Figure \ref{f.ptm}. In one scenario, the kinase can only be activated if two molecules bind (e.g, a ligand ($L$) and a receptor  {($Rc$))} as shown in Figure \ref{f.ptm}-a. This is modelled by adding the reaction \begin{equation} \label{ptmRL}  {Rc}+L   \rightleftharpoons K\end{equation} to the BIN \eqref{ptm}. It can be shown that this network  has a unique positive steady steady state for each assignment of non-zero total substrate, ligand and receptor concentrations \cite{craciun05,craciun06}. However, a PWL RLF fails to exist \cite{dissertation,plos}. It has been shown recently that this network enjoys \emph{local} asymptotic stability for any choice of kinetics, i.e., the Jacobian matrix is always Hurwitz at any steady state \cite{colaneri}. However, there are no known robust global guarantees on the asymptotic behavior.  Other regulation mechanisms exist \cite{craciun06}. For instance, the kinase might be inactivated by binding to an inhibitor ($I$) such as a drug used in targeted cancer therapies \cite{bhullar18}. This is represented by adding the reaction $K+I \rightleftharpoons EI$ to the network \eqref{ptm} as shown in Figure \ref{f.ptm}-b). A third possible architecture has the intermediate complex ($C$) sequestered by $I$. Hence, the reaction $C+I \rightleftharpoons CI$ is added to \eqref{ptm}. None of these networks can be globally analyzed using current techniques.    We will be studying these networks under our new framework and show that they are globally non-oscillatory.
 	
 	It is worth mentioning that not all regulation mechanisms of the PTM are beyond current methods of analysis. For instance,  instead of a simple decay of $P$ to $S$, another enzyme called a phosphatase can be used  to accelerate the dephosphorylation of $P$ back to $S$. This latter architecture is well-studied \cite{angeli10}, and its global stability can be certified by a PWL RLF \cite{plos}.
 	
 	This paper is organized as follows. Mathematical definitions and notation are given in section II. Section III revisits Muldowney's results in terms of exponential stability. Section 4 provides a robust Lyapunov criterion for robust non-oscillation when the dynamics can be embedded in a Linear Differential Inclusion (LDI). Section 5 studies the application of the results to BINs. Section 6 provides algorithms for constructing the PWL RLF. Section 7 studies several examples of enzymatic cycles that have not been amenable to methods in the literature. Finally, section 8 is dedicated to a brief discussion of the results.

 	\section{Non-oscillatory systems}
 	\subsection{Definitions and Notation}
 	Our basic concepts and results are not restricted to BINs, but apply to more general classes of nonlinear systems.
 	For a dynamical system 
 	\begin{equation} 
 		\label{thesys}
 		\dot{x}(t) = f(x(t)),
 	\end{equation}
 	with  {the state $x:\mathbb R_{\ge 0 } \to \mathbb{R}^n$} and $f: X \subset \mathbb{R}^n \rightarrow \mathbb{R}^n$ of class $\mathcal{C}^1$, we denote by $\varphi(t,x_0)$ the solution at time $t$ from initial condition $x_0$ at time $0$.
 	Moreover, $\omega(x_0)$ denotes the $\omega$-limit set of such a solution.
 	The set $X$ can be arbitrary, but we assume that it is forward invariant for the dynamics, that is, $\varphi(t,x_0) \in X$ for all
 	$t \geq 0$ and all $x_0 \in X$. Class $\mathcal{C}^1$ means that $f$ is the restriction of a $\mathcal{C}^1$ function defined on some open neighborhood of $X$.
 	We let $\mathbb{D} := \{ z \in \mathbb{R}^2: z_1^2 + z_2^2 \leq 1 \} \subset \mathbb{R}^2$ denote the unit disk, $\mathbb{S} := \{ [ \cos ( \theta), \sin ( \theta) ], \theta \in [0, 2 \pi] \}$ the unit circle, and $\mathbb{S}^k$ the $k$-dimensional torus.
 	\begin{definition}
 		\label{oscillatoryis}
 		We say that (\ref{thesys}) exhibits oscillatory behavior if, for some integer $k \geq 1$, it admits a compact invariant set $\Omega \subset {X}$ which is the image of a $\mathcal{C}^1$ injection $h: \mathbb{S}^k \rightarrow X$ not everywhere singular.  If it does not admit such a set then we say that \eqref{thesys} is  {non-oscillatory}.
 	\end{definition}
 	Notice that Definition \ref{oscillatoryis} includes systems with many kind {s} of  {non-converging} behavior, in particular, systems with periodic solutions, or asymptotically periodic solutions. In this case $\omega ( x_0)$ is invariant and diffeomorphic to $\mathbb{S}$.
 	Furthermore, it includes systems with multiple incommensurable oscillation frequencies, 
 	(such as quasiperiodic solutions, or asymptotically quasiperiodic solutions). In such a case $\omega(x_0)$ is the image of $\mathbb{S}^k$, for some $k>1$  and some map $h$.
 	It also includes other types of non-convergent behaviors, such as solutions approaching
 	a closed curve of equilibria, and certain types of homoclinic and heteroclinic orbits (of finite length). Moreover, it also encompasses certain types of chaotic systems as the associated attractors are sometimes known to embed unstable periodic solutions \cite{so07}.  
 	
 	 {While the gap between non-convergent and oscillatory behavior seems to be extremely small in practice, ruling out its existence appears to be very challenging, given the existing technical tools.}
 	
 	We introduce some of the required background on compound matrices and their role in assessing the evolution of $k$-hypervolumes along solutions of a dynamical system.
 	For an arbitrary $\mathcal{C}^1$ injection $h: \mathbb{D} \rightarrow X \subset \mathbb{R}^n$, the area of $h ( \mathbb{D} )$ can be computed as:
 	\begin{equation}
 		\label{areadisc}
 		\mu_2 ( h(\mathbb{D})) := \int_{\mathbb{D}}  \sqrt{ \sum_{I \subset \{1,\ldots, n\}: |I|=2 } \left [ \textrm{det} \left ( \frac{ \partial h_I }{ \partial z} (z) \right ) \right]^2  } \, dz_1 dz_2.  
 	\end{equation}  
 	where, for a set $I= \{i_1, i_2, \ldots, i_{|I|} \} \subset \{ 1,2, \ldots n \}$ with elements ordered according to  $i_1<i_2< \ldots< i_{|I|}$, and a vector $h$, $h_I$ denotes the sub-vector $[h_{i_1}, h_{i_2}, \ldots, h_{i_{|I|}}]'$.
 	Similarly, for any given $\mathcal{C}^1$ injective map $h: \mathbb{S}^k \rightarrow \mathbb{R}^n$, and $k \geq 1$, the $k$-hypervolume of $h ( \mathbb{S}^k )$ can be obtained according to:
 	\begin{align}
 		\label{areatorus}
 		&\mu_k ( h(\mathbb{S}^k)) \\ &\nonumber  := {\Huge  \int_{\mathbb{S}^k} }   \sqrt{ \sum_{I \subset \{1,\ldots, n\}:|I|=k }  \left [ \textrm{det} \left ( \frac{ \partial h_I }{ \partial \theta} (\theta) \right ) \right ]^2  } \, d\theta_1 d \theta_2 \ldots d \theta_k.  
 	\end{align}
 	These quantities can further be defined along solutions of (\ref{thesys}); in particular,
 	we aim at quantifying $\mu_2 ( \varphi(t,h( \mathbb{D}) ) )$ and $\mu_k ( \varphi(t, h(\mathbb{S}^k )))$. To this end, we associate to system (\ref{thesys})
 	the family of variational equations:
 	\begin{equation} 
 		\label{kvar}
 		\begin{array}{rcl}
 			\dot{x} &=& f(x) \\
 			\dot{\delta}^{(k)} (t) &=& { \dfrac{ \partial f}{\partial x}^{(k)} (x(t)) } \normalsize \, \delta^{(k)} (t)
 		\end{array}
 	\end{equation}
 	where $\delta^{(k)}$ is a vector in $\mathbb{R}^{{n \choose k}}$ and, for any $A \in \mathbb{R}^{n \times n}$,
 	$A^{(k)} \in \mathbb{R}^{{n \choose k}\times {n \choose k}}$ denotes the $k$-th additive compound matrices for $k=1 \ldots n$, which are defined element-wise as follows \cite{muldowney}:
 	\begin{equation}\label{e.compound}
 		A_{IJ}^{(k)}=\left\{\begin{array}{ll} A_{i_1i_1}+...+A_{i_k i_k}, & \mbox{if}~I=J \\
 			(-1)^{\ell+s} A_{i_s j_{\ell}}, & \mbox{if exactly one entry}~i_s~\mbox{of}~I\\&\mbox{does not occur in}~J\\&\mbox{and}~j_\ell~\mbox{does not occur in}~I \\ 0, & \mbox{if}~I~\mbox{differs from}~J~\mbox{in two} \\&\mbox{ or more entries},\end{array} \right.
 	\end{equation}
 	where $I,J \subset \{1,..,n\}$ are of cardinality $k$, respectively denoted as $I=\{i_1, i_2, \ldots ,i_k\}$, $J=\{ j_1, j_2, \ldots, j_k \}$ with entries indexed such that  $1 \le i_1 <i_2<...<i_k \le n$ and $1 \le j_1 < j_2 < \ldots < j_k \leq n$. \\ 

 	 {
 		To exemplify this construction, consider the case $k=2$, which will later be our main object of study, and the $4 \times 4$ matrix, $A=[a_{ij}]$. The corresponding $6 \times 6$ additive compound matrix $A^{(2)}$ reads:
 		\tiny
 		\[  \left(\begin{array}{cccccc} a_{11}+a_{22} & a_{23} & a_{24} & -a_{13} & -a_{14} & 0\\ \\ a_{32} & a_{11}+a_{33} & a_{34} & a_{12} & 0 & -a_{14}\\ \\a_{42} & a_{43} & a_{11}+a_{44} & 0 & a_{12} & a_{13}\\ \\-a_{31} & a_{21} & 0 & a_{22}+a_{33} & a_{34} & -a_{24}\\ \\-a_{41} & 0 & a_{21} & a_{43} & a_{22}+a_{44} & a_{23}\\ \\ 0 & -a_{41} & a_{31} & -a_{42} & a_{32} & a_{33}+a_{44} \end{array}\right).  \]
 	}
 	\normalsize
 	Fix any subset $J \subset \{ 1,2, \ldots, n \}$ of cardinality $k$. It is  known \cite{muldowney} that, by arranging minors of $\partial \varphi / \partial x_J$
 	for all subsets $I \subset \{1,2,\ldots,n \}$ of cardinality $k$ in lexicographic order within the vector $\delta^{(k)} (t)$ as follows
 	\begin{equation} 
 		\label{stackdet} \delta^{(k)} (t):= \left [ \begin{array}{c} \vdots \\ \textrm{det} \left ( \frac{ \partial \varphi_I }{\partial x_{J} }   (t,x) \right )  \\ \vdots \end{array} \right ],  
 	\end{equation} 
 	the resulting vector $\delta^{(k)} (t)$ fulfills the $k$-th variational equation (\ref{kvar})
 	with initial condition $x(0)=x$ and 
 	\[ \delta^{(k)} (0)= \left [ \begin{array}{c} \vdots \\ \delta_{I,J} \\ \vdots \end{array} \right ], \]
 	where $\delta_{I,J}:=1$ iff $J=I$ and $0$ otherwise. These properties will be exploited in subsequent sections to 
 	quantify how the hypervolumes previously defined evolve along solutions of the considered system of differential equations.
 	
 	\subsection{Muldowney's result revisited}
 	Our main goal for this section is to obtain an analog to Muldowney's result \cite{muldowney} by making use of the notion of uniform exponential stability.
 	His seminal paper shows that if the logarithmic norm of the second-additive compound of the Jacobian matrix is negative throughout state-space for a nonlinear system, (non-trivial) periodic solutions cannot exist.
 	We formulate the result by using the notion of uniform exponential stability of the associated second-additive compound variational equation, so that
 	we can verify assumptions and certify properties through the construction of suitable Lyapunov functions for an associated LDI. Moreover, we strengthen the original result by generalising its applicability to invariant submanifolds of any dimension. We start with the following Lemma about time varying-matrices:
 	\begin{lemma}
 		Let $\Lambda(t):\mathbb R_{\ge 0} \to \mathbb R^{n \times n}$ be a time-varying matrix. If all minors of order $k$ of $\Lambda(t)$ converge  to $0$ so do all minors of order $q\ge k$. Furthermore, if the assumed convergence is exponential, then so is the convergence of all minors of order $q\ge k$.
 	\end{lemma}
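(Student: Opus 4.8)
The plan is to bypass any direct combinatorial expansion—which fails because the entries of $\Lambda(t)$ need not stay bounded even when all order-$k$ minors vanish—and instead to control every minor through the singular values of $\Lambda(t)$. Write $\sigma_1(t)\ge\sigma_2(t)\ge\cdots\ge\sigma_n(t)\ge 0$ for the singular values of $\Lambda(t)$. The entire statement will follow from two elementary facts relating minors to these singular values: one giving an upper bound on large minors, and one extracting quantitative information from the hypothesis on order-$k$ minors.

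First I would record the upper bound. For any $q$ rows $R$ and $q$ columns $C$, the submatrix $\Lambda[R,C]$ has singular values dominated by those of $\Lambda(t)$ (deleting rows and columns cannot increase singular values), so $|\det\Lambda[R,C]| = \prod_{i=1}^{q}\sigma_i(\Lambda[R,C]) \le \sigma_1(t)\cdots\sigma_q(t)$. Equivalently, $\sigma_1\cdots\sigma_q$ is the operator norm of the $q$-th multiplicative compound $C_q(\Lambda(t))$, the matrix whose entries are exactly the order-$q$ minors, so it dominates each of them. Second, the same identity applied at order $k$ gives $\sigma_1(t)\cdots\sigma_k(t) = \|C_k(\Lambda(t))\|_{\mathrm{op}} \le \|C_k(\Lambda(t))\|_{\mathrm{F}} = \bigl(\sum (k\text{-minors})^2\bigr)^{1/2}$; hence the hypothesis that all order-$k$ minors tend to $0$ forces $\sigma_1(t)\cdots\sigma_k(t)\to 0$.

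Combining these is the crux. From $\sigma_1\ge\cdots\ge\sigma_k$ we get $\sigma_k \le (\sigma_1\cdots\sigma_k)^{1/k}$, and since $\sigma_j\le\sigma_k$ for all $j\ge k$ this yields the clean inequality $\sigma_1\cdots\sigma_q \le (\sigma_1\cdots\sigma_k)^{q/k}$ for every $q\ge k$. Together with the first bound, every order-$q$ minor is at most $(\sigma_1(t)\cdots\sigma_k(t))^{q/k}$ in magnitude, which tends to $0$; this proves the first assertion. For the second, if the order-$k$ minors decay as $O(e^{-\lambda t})$, then $\sigma_1(t)\cdots\sigma_k(t)\le c\,e^{-\lambda t}$ (the Frobenius bound costs only the constant factor $\binom{n}{k}$), whence each order-$q$ minor is $O(e^{-(q/k)\lambda t})$, i.e.\ exponentially convergent with rate at least $\lambda$.

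The main obstacle is precisely the absence of any a priori bound on the entries of $\Lambda(t)$: a naive Laplace expansion of a $(k{+}1)$-minor writes it as a sum of order-$k$ minors weighted by single entries, and those weights can blow up, so termwise smallness of the $k$-minors does not transfer. The singular-value route circumvents this because it measures the minors \emph{globally} through $\sigma_1\cdots\sigma_q$, and the only nontrivial inputs are the two standard facts that (i) the singular values of a submatrix are dominated by those of the full matrix and (ii) the order-$q$ minors are the entries of the $q$-th multiplicative compound, whose top singular value is $\sigma_1\cdots\sigma_q$. A purely symmetric-function formulation is available should one prefer to avoid compounds: writing $s_i=\sigma_i^2$, the sum of squares of all order-$k$ minors equals the elementary symmetric polynomial $e_k(s_1,\dots,s_n)$, and Maclaurin's inequality $(e_q/\binom{n}{q})^{1/q}\le (e_k/\binom{n}{k})^{1/k}$ for nonnegative reals delivers the same decay $e_q = O(e_k^{q/k})$.
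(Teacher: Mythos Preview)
Your argument is correct. The two ingredients you use---that $\sigma_1(t)\cdots\sigma_q(t)$ dominates every order-$q$ minor (via interlacing for submatrices, or equivalently as the operator norm of the $q$th multiplicative compound), and that the Frobenius norm of the $k$th compound bounds $\sigma_1(t)\cdots\sigma_k(t)$ from above---are standard, and the inequality $\sigma_1\cdots\sigma_q\le(\sigma_1\cdots\sigma_k)^{q/k}$ follows immediately from $\sigma_k^k\le\sigma_1\cdots\sigma_k$ together with $\sigma_j\le\sigma_k$ for $j>k$. The exponential statement with rate at least $(q/k)\lambda$ then drops out.

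This is a genuinely different route from the paper's. The paper argues by induction on $q$, using the adjugate identity $\det(\mathrm{adj}(A))=\det(A)^{q-1}$ applied to each $(k{+}1)\times(k{+}1)$ submatrix: since the entries of $\mathrm{adj}(A)$ are (signed) order-$k$ minors, their decay forces $\det(\mathrm{adj}(A))\to 0$, hence $|\det(A)|=|\det(\mathrm{adj}(A))|^{1/k}\to 0$. Iterating gives the same rate $(q/k)\lambda$ that you obtain directly. Your singular-value argument is more global and avoids the induction entirely; it also makes transparent why the naive Laplace expansion fails (unbounded entries) while the result still holds. The paper's approach, by contrast, stays within elementary determinant algebra and never invokes the SVD or multiplicative compounds, which keeps the prerequisites lighter. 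Both land on the identical quantitative conclusion.
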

 	\begin{proof}
 		We prove the result by induction, by showing that if the convergence happens for $k$, then it is also fulfilled for $q = k+1$.  
 		
 		Recall that for an invertible square matrix $A$ of dimension $q$, it holds that
 		$A  \, \textrm{adj} (A) = \textrm{det}(A) I_q$,
 		where $\textrm{adj}(A)$ denotes the adjoint matrix of $A$. Hence, taking determinants in both sides of this previous equality we get:
 		\begin{align*} \textrm{det} (A) \cdot \textrm{det} ( \textrm{adj}(A)) &=
 			\textrm{det} ( A \, \textrm{adj} (A) ) \\ & = \textrm{det} ( \textrm{det}(A) I_q ) = \textrm{det} (A)^q.  \end{align*}
 		In particular then, $\textrm{det} ( \textrm{adj}(A)) = \textrm{det}(A)^{q-1}$. 
 		Taking absolute values and inverting this relationship yields:
 		\begin{equation}
 			\label{exppreservation}
 			|\textrm{det}(A)| = \psi( \textrm{adj} (A) ) 
 		\end{equation}	
 		where $\psi:\mathbb R^{n \times n} \to \mathbb R_{\ge 0}$ is continuous and  given as $\psi(B)= \sqrt[q-1]{ | \textrm{det} ( B ) |}$. Note that $\psi(0)=0$.
 		More generally, if $A$ is singular, then $\det(A)=0$ means also that the inequality trivially holds:
 		\begin{equation}
 			\label{boundonarea}
 			|\textrm{det}(A)| \leq \psi( \textrm{adj} (A) ).
 		\end{equation}
 		We will apply this observation  to the matrices $A= [\Lambda]_{IJ}$ for any choice of $I,J \subset \{ 1,2, \ldots, n\}$ of cardinality $q$.
 		By the induction hypothesis for any $\tilde{I}$,$\tilde{J}$ of cardinality $k$ it holds,
 		\[ \lim_{t \rightarrow + \infty }  \textrm{det} \left (  [\Lambda]_{\tilde{I}{\tilde{J} } }(t)   \right ) = 0.   \]
 		Hence, the same is true of each of the entry of the adjoint matrix (which by definition are minors of dimension $q-1=k$ possibly multiplied by $-1$):
 		\[ \lim_{t \rightarrow + \infty}   \textrm{adj}  \left (  [\Lambda]_{{I}{{J} } }(t)   \right ) = 0. \]
 		In particular, then, our convergence claim follows from (\ref{boundonarea}) and
 		continuity of $\psi$ and the fact that $\psi(0)=0$. 

 		In order to prove exponential convergence, assume that for some $M$ and $\lambda>0$, the following is true:
 		\[  \textrm{det} \left (  [\Lambda]_{\tilde{I}{\tilde{J} } }(t)   \right ) \leq M e^{- \lambda t}  \qquad \forall \, t \geq 0\]
 		for all $\tilde{I}$, $\tilde{J}$ of cardinality $k$. We see that
 		for all $I$, $J$ of cardinality $k+1$, and all $i,j$ in $\{1, \ldots, k+1\}$,	it holds that:
 		\[ \left | \textrm{adj}  \left (  [\Lambda_{IJ}](t) \right )_{i,j} \right | \leq M e^{- \lambda t} \qquad \forall \, t \geq 0. \]
 		Hence, substituting the above entry-wise upper-bound in (\ref{boundonarea}) yields: 
 		\begin{align*} | \det \left ( [\Lambda_{IJ}](t)  \right ) |  & \leq \psi \left ( \textrm{adj}  \left ([\Lambda_{IJ}](t) \right ) \right ) \\ &\leq \sqrt[k]{(k+1)! M^{k+1} e^{- (k+1) \lambda t}} = \tilde{M} e^{- \frac{k+1}{k} \lambda t} \end{align*}
 		for a suitable choice of $\tilde{M}$. This completes the proof of the induction step in the case of exponential convergence.
 	\end{proof}
 	The following corollary follows:
 	\begin{corollary}
 		\label{usefullemma}
 		Assume that for some initial condition $x(0)=x$ and some $k \in \{ 1,2, \ldots n \}$, the solutions of (\ref{kvar}) with arbitrary initial conditions
 		$\delta^{(k)}(0) \in \mathbb{R}^{{n \choose k}}$ fulfil:
 		\[ \lim_{t \rightarrow + \infty} \delta^{(k)} (t) = 0. \]
 		Then, the same is true for all solutions
 		$\delta^{(q)} (t)$ of (\ref{kvar}) for $q$ in $\{ k, k+1, \ldots, n \}$.
 		Moreover, if the assumed convergence to $0$ is exponential (and uniform), so is it for $\delta^{(q)} (t)$.
 	\end{corollary}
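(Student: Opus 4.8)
The plan is to deduce the corollary from the preceding Lemma by taking the time-varying matrix $\Lambda(t)$ there to be the state-transition matrix of the \emph{first} variational equation along the fixed solution $x(t)=\varphi(t,x)$. Writing $\Phi(t):=\frac{\partial \varphi}{\partial x}(t,x)\in\mathbb{R}^{n\times n}$, this matrix solves $\dot\Phi=\frac{\partial f}{\partial x}(x(t))\,\Phi$ with $\Phi(0)=I_n$. The structural fact I would rely on is exactly the one recorded around \eqref{stackdet}, due to Muldowney: for each fixed subset $J$ of cardinality $k$, the lexicographically ordered vector of minors $\det\!\left(\frac{\partial\varphi_I}{\partial x_J}(t,x)\right)$ solves the $k$-th variational equation \eqref{kvar} with initial datum the standard basis vector indexed by $J$. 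Stacking these over all $J$, the $\binom{n}{k}\times\binom{n}{k}$ matrix whose $(I,J)$ entry is the corresponding order-$k$ minor of $\Phi(t)$ is precisely the state-transition matrix of the $k$-th variational system; its columns form a basis of solutions $\delta^{(k)}(t)$, and its entries are exactly all the order-$k$ minors of $\Phi(t)$.

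With this identification, I would first translate the hypothesis into the language of minors. Because $\delta^{(k)}(t)$ depends linearly on $\delta^{(k)}(0)$, the assumption that every solution tends to $0$ is equivalent to the convergence of each column of the state-transition matrix above, hence to the convergence to $0$ of every order-$k$ minor of $\Phi(t)$. This is precisely the antecedent of the preceding Lemma applied with $\Lambda=\Phi$.

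I would then invoke that Lemma directly: all order-$q$ minors of $\Phi(t)$ converge to $0$ for every $q\ge k$. By the same compound-matrix identity, the order-$q$ minors of $\Phi(t)$ are exactly the entries of the state-transition matrix of the $q$-th variational system; therefore that matrix tends to $0$, and every solution $\delta^{(q)}(t)$, being a linear combination of its columns, converges to $0$. This establishes the first claim for all $q\in\{k,\dots,n\}$. For the exponential and uniform case, uniform exponential convergence of the $\delta^{(k)}(t)$ over initial conditions on the unit sphere is equivalent to an entrywise bound of the form $|\det(\partial\varphi_I/\partial x_J)|\le M e^{-\lambda t}$ on the order-$k$ minors of $\Phi(t)$, which is exactly the hypothesis of the exponential part of the Lemma; applying it yields an exponential bound (with a possibly larger rate) on all order-$q$ minors, so the state-transition matrix of the $q$-th variational system decays exponentially and all $\delta^{(q)}(t)$ converge exponentially.

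The only genuinely delicate point, and the one I would be most careful about, is the bookkeeping that links the phrase ``arbitrary initial conditions $\delta^{(k)}(0)$'' to the finitely many standard-basis columns that realise the minors of $\Phi(t)$, together with the fact that convergence, uniformity, and exponential decay are all preserved when passing to finite linear combinations. Once the minor/compound correspondence of \eqref{stackdet} is in place, however, essentially all the analytic content is carried by the preceding Lemma, so the corollary follows with little additional work.
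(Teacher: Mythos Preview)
Your proposal is correct and follows essentially the same approach as the paper: you set $\Lambda(t)=\frac{\partial\varphi}{\partial x}(t,x)$, use the correspondence \eqref{stackdet} between order-$k$ minors of $\Lambda(t)$ and solutions of the $k$-th variational equation to translate the hypothesis and conclusion into statements about minors, and then invoke the preceding Lemma. The paper's proof is just a terser version of exactly this argument.
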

 	\begin{proof}
 		The result follows from the previous Lemma because of the connection between solutions of (\ref{kvar}) and minors of
 		$\frac{ \partial \varphi}{\partial x} (t,x)$ for any given initial condition  {$x_0$}. Hence, the claim is  equivalent to showing that if all minors of order $k$ of $\Lambda(t):=\frac{ \partial \varphi}{\partial x} (t,x)$ converge (exponentially) to $0$ so do all minors of order $k+1$. The latter statement immediately follows from the previous Lemma.
 	\end{proof}

 	Our main Theorem for general systems is as follows.
 	\begin{theo}
 		\label{muld}
 		Consider a dynamical system as in (\ref{thesys}):
 		 {\begin{equation}\label{thesys0}
 				\dot{x} (t) = f(x(t)),
 		\end{equation}}
 		and assume that, for some convex set $K$ and all $x_0 \in K \subset X$, the second variational equation
 		\begin{equation} 
 			\label{2var}
 			 {\dot{\delta^{(2)}} (t) = \frac{ \partial f}{\partial x}^{(2)} (x(t)) \, \delta^{(2)} (t) }
 		\end{equation}
 		is uniformly exponentially stable, i.e., there exist  $M, \lambda>0$ such that, for all $t \geq 0$ and all $\delta^{(2)} (0)$:
 		\begin{equation}
 			\label{expstability}
 			| \delta^{(2)} (t) | \leq M e^{- \lambda t} \, | \delta^{(2)} (0) |, 
 		\end{equation}
 		with $M$ and $\lambda$ independent of $x(0)$ and $\delta^{(2)}(0)$.
 		Then, the dynamical system \eqref{thesys} is non-oscillatory.
 	\end{theo}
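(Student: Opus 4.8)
The plan is to argue by contradiction against Definition~\ref{oscillatoryis}. Suppose \eqref{thesys} were oscillatory, so that for some integer $k\ge 1$ there is a $\mathcal{C}^1$ injection $h:\mathbb{S}^k\to X$, not everywhere singular, whose image $\Omega=h(\mathbb{S}^k)$ is compact and invariant. Two facts will then be in tension. On one hand, since $h$ is non-singular on an open subset of $\mathbb{S}^k$, the integrand in \eqref{areatorus} is strictly positive there, so $\mu_k(\Omega)>0$; and since $\Omega$ is invariant, $\varphi(t,\Omega)=\Omega$ as a set, so $\mu_k(\varphi(t,\Omega))$ equals the positive constant $\mu_k(\Omega)$ for every $t$ (using that $\varphi(t,\cdot)\circ h$ stays injective, being the composition of a diffeomorphism with an injection, so the integral formula genuinely computes the hypervolume of the set). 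On the other hand, I will show this same quantity decays to zero, yielding the contradiction.

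The engine of the decay estimate is the chain rule together with the Cauchy--Binet formula. Reparametrizing $\varphi(t,\Omega)$ by $\theta\mapsto\varphi(t,h(\theta))$, its Jacobian is the $n\times k$ product $\Phi(t,\theta)\,H(\theta)$, where $\Phi(t,\theta):=\tfrac{\partial\varphi}{\partial x}(t,h(\theta))$ and $H(\theta):=\tfrac{\partial h}{\partial\theta}(\theta)$. Writing $(\Phi H)_I$ for the $k\times k$ submatrix formed by the rows in $I$, $\Phi_{IJ}$ for the $(I,J)$ minor of $\Phi$, and $H_J$ for the rows in $J$ of $H$, Cauchy--Binet gives
\[
\det\big((\Phi H)_{I}\big)\;=\;\sum_{|J|=k}\det\big(\Phi_{IJ}\big)\,\det\big(H_{J}\big).
\]
The minors $\det(\Phi_{IJ})$ are precisely the components of the solution $\delta^{(k)}(t)$ of \eqref{kvar} started from $x(0)=h(\theta)$ with the canonical initial data \eqref{stackdet}. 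By hypothesis the second variational equation \eqref{2var} is uniformly exponentially stable, so Corollary~\ref{usefullemma} yields that every $\delta^{(q)}(t)$, $q\ge 2$, decays to zero exponentially, uniformly with respect to the base point $h(\theta)$. Because $H(\theta)$ is continuous on the compact torus $\mathbb{S}^k$, its minors are bounded, so the order-$k$ minors $\det((\Phi H)_I)$ tend to $0$ uniformly in $\theta$.

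For $k\ge 2$ this closes the argument: the integrand of \eqref{areatorus} along $\theta\mapsto\varphi(t,h(\theta))$ tends to $0$ uniformly on $\mathbb{S}^k$, hence $\mu_k(\varphi(t,\Omega))\to 0$, contradicting its being constant at $\mu_k(\Omega)>0$. The case $k=1$ (periodic and asymptotically periodic orbits, Muldowney's original setting) needs one extra geometric step, since the arc length of an invariant loop is not controlled by the second compound. Here I would span the loop $\Omega=h(\mathbb{S})$ by a $\mathcal{C}^1$ disk, choosing $g:\mathbb{D}\to X$ with $g|_{\partial\mathbb{D}}=h$, and apply the identical Cauchy--Binet/Corollary~\ref{usefullemma} estimate to the order-$2$ minors in the disk-area formula \eqref{areadisc}, obtaining $\mu_2(\varphi(t,g(\mathbb{D})))\to 0$. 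But each $\varphi(t,g(\mathbb{D}))$ spans the fixed non-degenerate loop $\varphi(t,\Omega)=\Omega$, and such a loop cannot bound surfaces of arbitrarily small area (e.g.\ projecting onto a $2$-plane in which $\Omega$ has nonzero winding forces the spanning area to exceed the enclosed planar area), giving the required contradiction.

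The main obstacle I anticipate is not the determinant algebra but the two analytic/geometric points it rests on. First is the interchange of limit and integral: this is exactly where the \emph{uniformity} of the exponential bound \eqref{expstability} in the initial condition, and hence over the compact set $\Omega$, is indispensable, since mere pointwise decay of the minors would not force the hypervolume to zero. Second, in the $k=1$ reduction, is rigorously establishing the strictly positive lower bound on the area of any surface spanning a fixed non-degenerate loop; this geometric lemma is what allows the second-compound hypothesis to rule out limit cycles even though it does not directly bound one-dimensional measure.
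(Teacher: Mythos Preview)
Your strategy coincides with the paper's: contradiction via invariance, the Cauchy--Binet expansion of the order-$k$ minors of $\tfrac{\partial}{\partial\theta}\varphi(t,h(\theta))$, the lift from $k=2$ to $k\ge 2$ via Corollary~\ref{usefullemma}, and for $k=1$ spanning the loop by a disk whose area is driven to zero while being bounded below. So the architecture is right.

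There is, however, a genuine gap in your $k=1$ reduction that your list of anticipated obstacles misses. You take $g:\mathbb{D}\to X$, but the hypothesis \eqref{expstability} is assumed only for initial conditions $x_0\in K$; for points $g(z)\notin K$ you have no control on $\delta^{(2)}(t)$, so the decay estimate for $\mu_2(\varphi(t,g(\mathbb{D})))$ does not follow. This is exactly the place, and the only place, where the paper uses the \emph{convexity} of $K$ stated in the theorem: choosing any $\tilde{x}\in K$ and setting $\tilde{h}(z):=(1-|z|)\tilde{x}+|z|\,h(z/|z|)$ produces a spanning disk that lies in $K$ as a convex combination of points of $K$, after which the uniform exponential bound legitimately applies on all of $\mathbb{D}$. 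Your sketch never invokes convexity, so as written the $k=1$ case is incomplete. For the strictly positive lower bound on the area of any surface spanning a fixed loop, the paper cites Douglas's solution of the Plateau problem rather than your projection-and-winding heuristic; your idea can be made to work, but the Plateau reference disposes of knotted or otherwise awkward contours without further case analysis.
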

 	\begin{proof}
 		The statement can be proved by contradiction. We start for the sake of simplicity, from the case $k=2$. %
 		Assume $h$ be a $\mathcal{C}^1$ function
 		$h : \mathbb{S}^2 \rightarrow K$, not everywhere singular, such that $h(\mathbb{S}^2) \subset K$ is invariant.
 		Of course, since $\varphi(t,h( \mathbb{S}^2)) = h ( \mathbb{S}^2 )$ by definition of invariant set:
 		\begin{equation}
 			\label{tobecontradicted}
 			\mu_2 ( \varphi(t, h( \mathbb{S}^2))) = \mu_2 (h(\mathbb{S}^2))>0, 
 		\end{equation}
 		where the last inequality follows by the implicit function theorem given that $h$ is not everywhere singular.
 		On the other hand,
 		\begin{align}
 			\label{areaforbound}
 			&\mu_2 ( \varphi(t,h(\mathbb{S}^2)) ) = \\ \nonumber &   {\Huge  \int_{\mathbb{S}^2} }   \sqrt{ \sum_{I \subset \{1,\ldots, n\}:|I|=2 }  \left [ \textrm{det} \left ( \frac{ \partial}{\partial \theta} \varphi_I (t, h(\theta))  \right ) \right ]^2  } \, d\theta_1 d \theta_2.  
 		\end{align}	
 		Moreover, by the chain rule, 
 		\[ \frac{ \partial}{\partial \theta} \varphi_I (t, h(\theta))
 		= \frac{ \partial}{\partial x} \varphi_I (t, h(\theta)) \frac{ \partial h}{\partial \theta }, \]
 		and therefore by the Cauchy-Binet formula:
 		\begin{align*} &\textrm{det} \left (   \frac{ \partial}{\partial \theta} \varphi_I (t, h(\theta))         \right ) \\   &= \sum_{J \subset{1,\ldots,n}: |J|=2 }
 			\textrm{det} \left ( \frac{ \partial}{\partial x_J} \varphi_I (t, h(\theta)) \right )   \textrm{det} \left ( \frac{ \partial h_J}{\partial \theta } \right ) \\
 			&= \sum_{J \subset{1,\ldots,n}: |J|=2 } \delta_I^{(2)} (t, [h(\theta), e_J]  ) \cdot \textrm{det} \left ( \frac{ \partial h_J}{\partial \theta } \right ).
 		\end{align*}	
 		where $\delta^{(2)} (t, [x_0,\delta_0^{(2)}])$ denotes the $\delta^{(2)}$-component of the solution of (\ref{2var}) from initial conditions $x(0)=x_0$ and $\delta^{(2)} (0) = \delta_0^{(2)}$.
 		We may therefore seek to bound from above the integrand (\ref{areaforbound}) using:
 		\begin{align*} & \textrm{det} \left (   \frac{ \partial}{\partial \theta} \varphi_I (t, h(\theta))         \right )^2 \\ & \leq 2 \sum_{J \subset{1,\ldots,n}: |J|=2 } \left [ \delta_I^{(2)} (t, [h(\theta), e_J]  ) \right ]^2 \cdot \textrm{det} \left ( \frac{ \partial h_J}{\partial \theta } \right )^2. \end{align*}
 		Taking sums over $I$ we get:
 		\begin{align*}
 			&\sum_{I \subset \{1,\ldots, n\}:|I|=2 }  \left [\textrm{det} \left ( \frac{ \partial}{\partial \theta} \varphi_I (t, h(\theta))  \right )\right ]^2     
 			\\&  \leq 2 \sum_{I,J \subset{1,\ldots,n}: |I|,|J|=2 } \left [ \delta_I^{(2)} (t, [h(\theta), e_J]  ) \right ]^2 \cdot \left [\textrm{det} \left ( \frac{ \partial h_J}{\partial \theta } \right )\right ]^2  \\
 			& =  2 \sum_{J \subset{1,\ldots,n}: |J|=2 } \left | \delta^{(2)} (t, [h(\theta), e_J]  ) \right |^2 \cdot \left [ \textrm{det} \left ( \frac{ \partial h_J}{\partial \theta } \right )\right ]^2.  \end{align*}
 		Moreover, by exponential uniform stability of (\ref{2var}) we see that:
 		\begin{align*}  &\mu_2( \varphi(t,h(\mathbb{S}^2) )  ) \\
 			& \leq \int_{\mathbb{S}^2} \sqrt{ 2 \sum_{J \subset{1,\ldots,n}: |J|=2 } M^2 e^{ -2 \lambda t} \cdot \textrm{det} \left ( \frac{ \partial h_J}{\partial \theta } \right )^2 } d \theta_1 d \theta_2 \\ &= \sqrt{2} M e^{- \lambda t} \mu_2( h(\mathbb{S}^2)). \end{align*}
 		The latter inequality however contradicts (\ref{tobecontradicted}) for all $t$ sufficiently large.
 		An analogous proof applies to any invariant set which is the image of an injection $h$ of $\mathbb{S}^k$ for $k > 2$, thanks to Corollary \ref{usefullemma}. Notice that convexity of $K$ was not
 		crucial so far in the proof.   
 		
 		We consider next the case $k=1$. Let $h: \mathbb{S} \rightarrow K$ be a class $\mathcal{C}^1$ map such that $h( \mathbb{S})$ is invariant. Pick any point $\tilde{x} \in K$.
 		We consider the map $\tilde{h}: \mathbb{D} \rightarrow K$ 
 		defined as $ \tilde{h} (z):= (1-|z|) \tilde{x} + |z| h(z/|z|)$ (this is a convex combination of $\tilde{x}$ and points of $h( \mathbb{S})$ and it therefore belongs to $K$ by convexity of the set). By construction $\tilde{h}$ defines a surface (not necessarily smooth everywhere) such that $\tilde{h} ( \partial \mathbb{D}) = h( \mathbb{S})$.
 		Notice that, by invariance of $h ( \mathbb{S})$ this is also true of the map
 		$\varphi(t, \tilde{h}( \cdot ) )$, i.e., $\varphi(t,\tilde{h}( \partial \mathbb{D} )) = h ( \mathbb{S} )$.
 		Our goal is to estimate the area of $\varphi(t,\tilde{h} ( \mathbb{D}))$.
 		This can be computed according to:
 		\begin{align}
 			\label{areaforbound2}
 			& \mu_2 ( \varphi(t,\tilde{h}(\mathbb{D})) ) \\ \nonumber & =   {\Huge  \int_{\mathbb{D} } }   \sqrt{ \sum_{I \subset \{1,\ldots, n\}:|I|=2 }  \left [\textrm{det} \left ( \frac{ \partial}{\partial z} \varphi_I (t, \tilde{h}(z))  \right ) \right ]^2  } \, d z_1 d z_2.  
 		\end{align}	
 		Following the same steps as in the previous proof we see that:
 		\begin{equation}
 			\label{expdiscdecay}
 			\mu_2 ( \varphi(t,\tilde{h}(\mathbb{D})) ) \leq \sqrt{2} M e^{- \lambda t} \mu_2 ( \tilde{h} ( \mathbb{D}  ) ). 
 		\end{equation}   
 		However, by \cite{douglas}, there exists a surface of minimal area which is bounded by a given contour. This surface may, in general, present self-intersections depending on how complex is the contour (for instance due to the presence of knots).
 		Moreover, the surface of minimal area has positive measure $\underline{\mu}>0$, \cite{douglas, muldowney}.
 		This, however contradicts (\ref{expdiscdecay}) for all sufficiently large $t>0$.
 		This concludes the proof of the Theorem.
 	\end{proof}
 	\begin{remark}
 		We point out that replacing second additive compound matrices in (\ref{2var}) with
 		the standard Jacobians, that is the case of $k=1$ instead of $k=2$, yields classical variational criteria for exponential incremental stability. Theorem \ref{muld} hence relaxes such assumptions since, by virtue of Corollary \ref{usefullemma}, exponential convergence for $k=1$ (as needed in incremental stability) implies exponential convergence for all higher values of $k$. The converse is obviously not true. \\
 		 {It is worth pointing out that condition (\ref{expstability}), in combination with the other assumptions of Theorem \ref{muld}, is only a sufficient condition for ruling out oscillatory behaviors. In the case of constant matrices the second additive compound is asymptotically stable iff the real part of the sum of the dominant and subdominant eigenvalues is negative.
 			This affords existence of an unstable eigenvalue, provided the subdominant one is sufficiently within the left-hand side of the complex plane. Likewise, in a time-varying context, one can expect exponential stability as in (\ref{expstability}) provided the dominant and subdominant Lyapunov exponents have negative sum.}
 	\end{remark}
 	
 	\begin{remark}
 		Notice that our conditions are also independent of so called Dual Lyapunov functions, as introduced by Rantzer, \cite{rantzer}. Specifically Rantzer makes use of the derivative of $n$-forms along the flow in order to impose an expansion condition on the volume everywhere away from the equilibrium of interest.
 		This implies almost global convergence to the equilibrium under suitable integrability conditions on the considered density functions.
 	\end{remark}
 	
 	\section{Robust Lyapunov criterion for persistently-excited differential inclusions}
 	In our subsequent treatment of BINs, we are interested in studying notions of robust non-oscillation. We interpret ``robustness'' in the control theory sense of structured uncertainties. Hence, we study  a class of uncertain dynamical systems.  Given the dynamical system \eqref{2var}, we want to study the case in which the dynamics of $\delta^{(2)}(t)$ can be embedded in a Linear Differential Inclusion (LDI).  For simplicity, we denote $z(t):=\delta^{(2)}(t)$.
 	
 	\subsection{Common Lyapunov functions for LDIs}
 	Similar to our previous works \cite{MA_cdc14,plos}, we seek to find a convex PWL Lyapunov function  {$V: \mathbb{R}^N \rightarrow \mathbb{R}_{\ge 0}$} of the following form:
 	
 	\begin{equation}
 		\label{convexlinearlyap}
 		V(z)= \max_{k \in \{1, \ldots, L \}} c_k^T z,
 	\end{equation}
 	for some vectors $c_1, \ldots, c_L \in \mathbb{R}^N$, where $N:={n \choose 2 }$  {to be evaluated for} $z(t)=\delta^{(2)}(t)$ as defined in \eqref{kvar}.
 	
 	We state the following definition :
 	\begin{definition}\label{commonLF}
 		Let the matrices $A_1,..,A_s \in \mathbb R^{N \times N}$, and a locally Lipschitz function $V:\mathbb R^N \to \mathbb R_{\ge 0}$ be given. For each $\varepsilon \geq 0$ let $\mathcal{A}_{\varepsilon}$ denote the set:
 		\begin{equation}\label{A_e} \mathcal{A}_{\varepsilon}= \left \{ \sum_{\ell=1}^s \alpha_\ell A_{\ell}: \alpha_{\ell} \geq \varepsilon, \ell=1 \ldots s \right \}. \end{equation}
 		Then,	we say that $V$ is a common non-strict Lyapunov function for the LDI
 		\begin{equation}\label{peldi}\dot z(t) \in \mathcal{A}_{\varepsilon} z(t)\end{equation}
 		if $V(z)$ is positive definite, (that is $V(z)>0$ for all $z \neq 0$) and it satisfies $\nabla V(z) A z \le 0$ whenever $\nabla V(z)$ exists and for all $A \in \mathcal{A}_{\varepsilon}$.
 	\end{definition}
 	\begin{remark} We show in Lemma \ref{lem.lip} in the Appendix that the conditions given in Definition \ref{commonLF} are necessary and sufficient for the time-derivative of $V$ (defined as the upper Dini's derivative) to be non-positive when evaluated over an arbitrary trajectory of the LDI. The details are given in the Appendix.
 	\end{remark}
 	
 	The following characterization is standard,  {but we include a proof in the Appendix} to make the discussion self-contained.
 	\begin{lemma}\label{lem2} Let the matrices $A_1,..,A_s$, and a locally Lipschitz function $V:\mathbb R^N \to \mathbb R_{\ge 0}$ be given. Then,	$V$ is a common Lyapunov function for the LDI (\ref{peldi}) iff $V$ is positive definite and
 		\begin{equation}
 			\label{commonlf}
 			V( e^{{A} t} z ) \leq V(z), \qquad \forall \, z, \forall \, t \geq 0, \; \forall \, A \in \mathcal{A}_{\varepsilon}.
 		\end{equation}
 	\end{lemma}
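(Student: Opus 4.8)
The plan is to prove the two implications separately, leaning on Definition \ref{commonLF} together with the Dini-derivative characterization announced in Lemma \ref{lem.lip} to dispatch the delicate direction, and on an elementary one-sided differentiation for the easy one. Before either, I would record the structural observation that, for every fixed $A \in \mathcal{A}_{\varepsilon}$ and every $z_0$, the curve $z(t) := e^{At} z_0$ is a genuine trajectory of the LDI (\ref{peldi}): indeed $\dot z(t) = A z(t) \in \mathcal{A}_{\varepsilon} z(t)$, corresponding to the constant selection $A(\cdot) \equiv A$. This is what lets me transfer statements phrased for arbitrary LDI trajectories into statements about the matrix exponentials appearing in (\ref{commonlf}).

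For the forward implication, assume $V$ is a common (non-strict) Lyapunov function. By Lemma \ref{lem.lip} the conditions of Definition \ref{commonLF} are equivalent to the upper Dini derivative $D^+ V(z(t))$ being non-positive along every trajectory of (\ref{peldi}); applying this to $z(t) = e^{At} z_0$ gives $D^+ g(t) \le 0$ for all $t \ge 0$, where $g(t) := V(e^{At} z_0)$. Since $V$ is locally Lipschitz, $g$ is continuous, and a continuous real function whose upper-right Dini derivative is non-positive everywhere is non-increasing; hence $g(t) \le g(0)$, that is $V(e^{At} z_0) \le V(z_0)$. As $A$, $z_0$, and $t$ are arbitrary this is precisely (\ref{commonlf}), while positive definiteness is inherited directly from Definition \ref{commonLF}.

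For the converse, assume $V$ is positive definite and that (\ref{commonlf}) holds. To verify Definition \ref{commonLF}, fix $A \in \mathcal{A}_{\varepsilon}$ and a point $z$ at which $\nabla V(z)$ exists, and set $g(t) := V(e^{At} z)$. The map $t \mapsto e^{At} z$ is smooth with derivative $Az$ at $t=0$, so differentiability of $V$ at $z$ gives, via the chain rule at a point, $g'(0) = \nabla V(z)\, A z$. Inequality (\ref{commonlf}) states $g(t) \le g(0)$ for all $t \ge 0$, hence the right derivative of $g$ at $0$ is non-positive and $\nabla V(z) A z \le 0$. Since $z$ and $A \in \mathcal{A}_{\varepsilon}$ were arbitrary, $V$ meets Definition \ref{commonLF}.

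The main obstacle is the forward implication: working directly from the almost-everywhere gradient inequality of Definition \ref{commonLF} one cannot naively invoke the chain rule along $t \mapsto e^{At}z_0$, since this analytic curve could in principle spend a set of times of positive measure inside the (measure-zero) non-differentiability set of the locally Lipschitz $V$, so that $\frac{d}{dt} V(e^{At}z_0)$ need not coincide with $\nabla V(e^{At}z_0)\cdot A e^{At}z_0$ almost everywhere. Lemma \ref{lem.lip} is exactly the device that circumvents this, by upgrading the a.e.\ gradient condition to an everywhere bound on the upper Dini derivative, after which the classical monotonicity criterion for Dini derivatives closes the argument.
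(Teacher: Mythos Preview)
Your proof is correct and follows essentially the same route as the paper's own argument: both directions hinge on the observation that $t\mapsto e^{At}z_0$ is an LDI trajectory, with the forward implication going through the Dini-derivative characterization of Lemma~\ref{lem.lip} and the converse through differentiation of $t\mapsto V(e^{At}z)$ at $t=0$. Your write-up is in fact more careful than the paper's, which leaves implicit both the monotonicity criterion for functions with non-positive upper Dini derivative and the appeal to Lemma~\ref{lem.lip} in the sufficiency direction.
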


 	\subsection{Asymptotic stability and LaSalle's argument}
 	Notice that the  individual subsystems only need to fulfill the non-strict inequality (\ref{commonlf}) which implies Lyapunov stability, and not asymptotic stability.  In order to prove uniform exponential stability of a differential inclusion on the basis of existence of a non-strict Lyapunov function, we need a LaSalle-like criterion in conjunction with some notion of persistence of excitation.  For this purpose, we will prove asymptotic stability of the differential inclusion \eqref{peldi} for every $\varepsilon>0$. We refer to \eqref{peldi} as a Persistently-Excited LDI (PELDI). Notice that,
 	\[ \mathcal{A}_{\varepsilon}  \subsetneq \mathcal{A}_0 = \textrm{cone} \left \{ A_1,A_2, \ldots, A_s \right \},\]
 	where ``cone'' denotes conic hull.
 	
 	Intuitively speaking this system is persistently excited since every vertex of the nominal differential inclusion (achieved for $\varepsilon=0$) takes part (at least with some $\varepsilon$ contribution)
 	to the formation of the state derivative direction.
 	This arises naturally in the context of BINs since a topology-based criteria based on the absence of critical siphons is sufficient to prove non-extinction of all chemical species (a property known also as persistence \cite{persistence}) and this leads to a potentially tighter embedding
 	as in (\ref{peldi}).

 	Hence, for a given LDI (\ref{peldi}) and the associated PWL Lyapunov function $V(z)$, we define the matrices given below:
 	\begin{equation}
 		\label{midef}
 		M_i := [ A_1^T c_i, A_2^T c_i, \ldots, A_s^T c_i],
 	\end{equation}
 	for all $i \in \{1,2 \ldots L\}$.

 	Our main result for this section is stated below.
 	\begin{theo}
 		\label{theolasalle}
 		Let $V(z)$ be a PWL common Lyapunov function for system (\ref{peldi}) with $\varepsilon=0$. Assume that
 		\begin{equation}	
 			\label{lasass}
 			\textrm{Ker} [M_i^T] =\{ 0 \}, \qquad \forall \, i \in \{1,2, \ldots, L \}. 
 		\end{equation}
 		Then, for all $\varepsilon>0$ the PELDI \eqref{peldi}
 		is uniformly exponentially stable.
 	\end{theo}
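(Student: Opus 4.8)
The plan is to reduce the statement to a uniform strict‑decrease estimate for $V$ along the PELDI and then bootstrap that estimate to an exponential rate using the degree‑one homogeneity of both $V$ and the dynamics. Write $I(z):=\{k\in\{1,\dots,L\}: c_k^Tz=V(z)\}$ for the active‑index set at $z$. Since $V$ is a non‑strict common Lyapunov function for $\mathcal A_0$, Lemma \ref{lem2} gives $V(e^{A_\ell t}z)\le V(z)$ for every vertex $A_\ell$ and all $t\ge 0$; taking the right derivative at $t=0$ (the directional derivative of $V$ in direction $A_\ell z$ equals $\max_{i\in I(z)}c_i^TA_\ell z$) yields the vertex‑wise inequalities
\begin{equation}\label{vertexineq}
c_i^TA_\ell z\le 0,\qquad \forall\, i\in I(z),\ \forall\,\ell\in\{1,\dots,s\}.
\end{equation}

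Next I would compute the worst admissible decay of $V$ along the inclusion. For fixed $A=\sum_\ell\alpha_\ell A_\ell\in\mathcal A_\varepsilon$ the upper right Dini derivative of $t\mapsto V(z(t))$ along $\dot z=Az$ is bounded by $\max_{i\in I(z)}c_i^TAz$, so the slowest admissible rate is
\begin{equation}\label{dinimax}
D^+V(z):=\sup_{A\in\mathcal A_\varepsilon}\max_{i\in I(z)}c_i^TAz
=\max_{i\in I(z)}\ \sup_{\alpha_1,\dots,\alpha_s\ge\varepsilon}\ \sum_{\ell=1}^s\alpha_\ell\, c_i^TA_\ell z,
\end{equation}
where the interchange of $\sup$ and $\max$ is legitimate because the maximum is over the finite set $I(z)$. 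By \eqref{vertexineq} every coefficient $c_i^TA_\ell z$ multiplying $\alpha_\ell$ is nonpositive for active $i$, so each inner supremum is attained at the lower boundary $\alpha_\ell=\varepsilon$, giving
\begin{equation}\label{rate}
D^+V(z)=\varepsilon\,\max_{i\in I(z)}\ \sum_{\ell=1}^s c_i^TA_\ell z\ \le\ 0 .
\end{equation}

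This is where persistence of excitation enters in the spirit of LaSalle's argument. Fix $z\ne 0$ and any $i\in I(z)$. If $\sum_\ell c_i^TA_\ell z=0$, then by \eqref{vertexineq} every summand vanishes, i.e. $M_i^Tz=0$; by hypothesis \eqref{lasass} this forces $z=0$, a contradiction. Hence $\sum_\ell c_i^TA_\ell z<0$ for each active $i$, so $D^+V(z)<0$ for all $z\ne 0$. To make the rate uniform I would restrict to the compact level set $\{z:V(z)=1\}$ (compact because $V$ is continuous, positive definite and positively homogeneous of degree one). The set‑valued map $z\mapsto I(z)$ is upper semicontinuous, hence the right‑hand side of \eqref{rate} is an upper semicontinuous function of $z$ and attains a maximum $-\gamma<0$ on this level set. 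By the degree‑one homogeneity of $V$ and of \eqref{rate} this upgrades to $D^+V(z)\le -\gamma\,V(z)$ for all $z$. A standard comparison argument then gives $V(z(t))\le e^{-\gamma t}V(z(0))$ along every trajectory of the PELDI, and the norm equivalence $a|z|\le V(z)\le b|z|$ (again from positive definiteness and homogeneity) converts this into $|z(t)|\le (b/a)\,e^{-\gamma t}|z(0)|$, the claimed uniform exponential stability; note $\gamma=\gamma(\varepsilon)>0$ scales with $\varepsilon$, consistent with the requirement $\varepsilon>0$.

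The step I expect to be the main obstacle is the rigorous handling of the nonsmooth points of $V$, where $\nabla V$ fails to exist and the decrease must be read through the active set $I(z)$ and the upper Dini derivative rather than a gradient. This is precisely what the appendix lemma referenced after Definition \ref{commonLF} is designed to provide, and I would invoke it to justify both \eqref{vertexineq} and the fact that $D^+V(z(t))$ genuinely bounds the upper right derivative of $t\mapsto V(z(t))$ along inclusion trajectories. The remaining delicate points—the $\sup$–$\max$ interchange in \eqref{dinimax} and the upper semicontinuity of \eqref{rate}—become routine once this active‑set formalism is in place.
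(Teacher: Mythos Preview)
Your argument is correct, but it follows a genuinely different route from the paper's. The paper proceeds by a LaSalle-type invariance argument: it fixes a solution of the PELDI, passes to the (weakly invariant) $\omega$-limit set on which $V$ is constant, finds a time interval where the active-index set is constant, and then uses $\alpha_\ell(\tau)\ge\varepsilon$ together with $c_k^TA_\ell\tilde x(\tau)\le 0$ to force $M_k^T\tilde x(\tau)=0$, hence $\tilde x\equiv 0$; uniform exponential stability is then obtained by invoking an external relaxation/homogeneity result. Your proof instead quantifies the decay directly: you compute the worst admissible Dini derivative, observe that the sup over $\mathcal A_\varepsilon$ collapses to $\alpha_\ell=\varepsilon$ because every term $c_i^TA_\ell z$ with $i\in I(z)$ is nonpositive, and then use \eqref{lasass} to rule out the zero-sum case. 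What your approach buys is an explicit decay rate $\gamma$ (linear in $\varepsilon$) and a self-contained argument via compactness of $\{V=1\}$ and degree-one homogeneity, avoiding both the $\omega$-limit machinery and the external reference. What the paper's approach buys is that it stays closer to classical invariance-principle templates and makes the persistence-of-excitation role of $\varepsilon>0$ visible in a dynamical (rather than static, pointwise) way. Your identification of the nonsmooth step as the main care point is accurate; the appendix Clarke-gradient lemma indeed supplies what you need, and the $\sup$--$\max$ interchange is just commutation of two suprema.
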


 	\begin{proof}
 		Fix any $\varepsilon>0$ and let $x(t)$ be an arbitrary solution of (\ref{peldi}). Since $V$ is a common Lyapunov function for (\ref{peldi}) with $\varepsilon=0$ it is a fortiori a common Lyapunov function for (\ref{peldi}) because of the inclusion $\mathcal{A}_{\varepsilon} \subset \mathcal{A}_0$. Hence,
 		$V(x(t)) \leq V(x(0))$ for all $t \geq 0$. Hence $x(t)$ is bounded (by positive definiteness and radial unboundedness of $V$). The Lyapunov function $V(x(t))$ is non-increasing along $x(t)$ and therefore it admits a limit as $t \rightarrow + \infty$.
 		Let $\bar{v} \geq 0$ be the value of this limit. The solution $x(t)$ approaches 
 		its non-empty $\omega$-limit set $\omega( x( \cdot) )$ and $V(\bar{x})= \bar{v}$ for all $\bar{x} \in \omega( x( \cdot))$. The set $\omega (x(\cdot))$ is weakly invariant, \cite{clarke}. We pick an arbitrary solution $\tilde{x} (t)$ of (\ref{peldi})
 		such that $\tilde{x}(t) \in \omega(x(\cdot))$ for all $t \geq 0$.
 		For each $t \geq 0$ we  {consider} the set of active vectors:
 		\begin{equation}
 			\mathcal{C}(t):= \{ k: V(\tilde{x}(t))=c_k^T \tilde{x}(t)   \}.
 		\end{equation}  
 		 {and define the corresponding set-valued map, $\mathcal{C} :t \mapsto 2^{\{1,\ldots,L\}}$.}
 		By continuity of $\tilde{x}(t)$ the set-valued map $\mathcal{C} $ is upper-semicontinuous, viz.
 		for any $t$ and any open neighborhood $U$ of $\mathcal{C}(t)$ there exists a neighborhood $N_{t}$ of $t$ such that $\mathcal{C} (N_{t}) \subset U$. Hence,
 		since $\mathcal{C}$ only takes discrete values, we see that the above inclusion can be strengthened to $\mathcal{C} (N_{t}) \subset \mathcal{C}(t)$.
 		Letting $t$ be a point where the cardinality of $\mathcal{C}(t)$ is minimal (which exists by finiteness of the set $\{1,2,\ldots, L \}$) we see that
 		$\mathcal{C} ( N_t ) = \mathcal{C} (t)$ and therefore there exists an interval $[t,\tilde{t}]$ ($\tilde{t}>t$) 
 		where %
 		$\mathcal{C}(\tau) = \mathcal{C}(t)$  for all $\tau \in [t, \tilde{t} ]$.
 		Pick any $k \in \mathcal{C}(t)$. We know that $V(\tilde{x}(\tau))
 		= c_k^T \tilde{x} (\tau)= \bar{v}$ for all $\tau$ in the considered interval. 
 		Hence, by definition of solution of (\ref{peldi}):
 		\[ c_k^T \dot{\tilde{x}} ( \tau ) = c_k^T \sum_{\ell=1}^s \alpha_\ell (\tau) A_\ell \tilde{x}(\tau) = \sum_{\ell=1}^s \alpha_\ell (\tau) c_k^T A_\ell \tilde{x}(\tau) = 0 \]
 		for some $\alpha_\ell (\tau) \geq \varepsilon$, and almost all $\tau \in [t,\tilde{t}]$.
 		Recalling that $c_k^T A_\ell \tilde{x}(\tau) \leq 0$, and using continuity of $\tilde{x}(\tau)$ this in turn implies:
 		\[ c_k^T A_\ell \tilde{x} ( \tau ) = 0, \qquad \forall \, \ell \in \{1,2,\ldots, L \}\; \forall \tau \in [t, \tilde{t}]. \]
 		Hence, $\tilde{x} (\tau)$ belongs to $\textrm{Ker}[ M_{k}^T]$, and by assumption
 		(\ref{lasass})
 		$\tilde{x} (\tau)=0$. By strong invariance of the origin, this implies $\omega(x(\cdot)) = \{0\}$ and therefore (see e.g. Theorem 2 in \cite{biotechprogressangelisontag}) uniform exponential stability of (\ref{peldi}) for all $\varepsilon>0$ follows, by a standard relaxation argument. 
 		
 	\end{proof}	
 	 {\begin{remark}
 			Conditions (\ref{lasass}) are used to rule out, using a first order derivative test, existence of non-zero solutions of (\ref{peldi})  evolving on a level-set of $V$ for some time-interval.  
 			As such, they could be relaxed by formulating higher order differential tests. This, however, would increase significantly the complexity of their verification. Such relaxation was not needed in practical examples.
 	\end{remark}}
 	
 	 {\begin{remark}
 			It is shown in \cite{blanchini2} that a BIN admitting  a non-strict polyhedral Lyapunov function 
 			is asymptotically stable iff a robust non-singularity condition
 			(for strictly positive linear combinations)
 			holds on the matrices defining the embedding of the nonlinear differential equation. Theorem (\ref{theolasalle}) differs in several respects. \\
 			It is, in fact, a stability result for a linear differential inclusion, rather than for nonlinear dynamics which are embedded within a linear differential inclusion. 
 			Notice also that the matrices $M_i$ in (\ref{midef}) both involve the Lyapunov function vectors $c_i$ and the dynamics of the switched system.
 			As such, condition (\ref{lasass})  is not immediately related to a condition of robust non-singularity which, by definition, only involves the matrices of the switched system. We cannot rule out that, on a deeper level, condition (\ref{lasass}) might be related or even equivalent to a robust non-singularity test.
 			
 	\end{remark} }

 	\section{Robust non-oscillation of  BINs}
 	In this section we study non-oscillation of BINs as described in \S 2.
 	\subsection{Background on BINs}
 	We use the standard notation \cite{erdi,feinberg87,plos}. A BIN (also called a ``Chemical Reaction Network'') is a pair $(\mathcal S, \mathcal R)$ with a set of admissible kinetics $\mathscr K_{\mathcal S,\mathcal R }$ to be defined below.
 	
 	\paragraph*{Stoichiometry} The finite set of  species is denoted by $\mathcal{S} := \{ S_1, S_2, \ldots, S_{n_s} \}$, which combine and transform through a finite set of  reactions, $\mathcal{R} := \{ \R_1, \R_2, \ldots, \R_{n_r} \}$.
 	A non-negative linear integer combination of species is  called a $\emph{complex}$, and an ordered pair of complexes  define a  reaction which is written customarily as:
 	\[ \R_j: \sum_{i=1}^{n_s} \alpha_{ij} S_i  \rightarrow \sum_{i=1}^{n_s} \beta_{ij} S_i,  \]	
 	with integer coefficients $\alpha_{ij}, \beta_{ij}$ (called the stoichiometry coefficients). These are usually arranged in a matrix $[\Gamma]_{ij}:= \beta_{ij}-\alpha_{ij}$, called the \emph{stoichiometry matrix}, whose $(i,j)$-entry specifies the net amount of molecules of $S_i$ produced or consumed by reaction $\R_j$. If $  \sum_{i=1}^{n_s} \beta_{ij} S_i \rightarrow \sum_{i=1}^{n_s} \alpha_{ij} S_i $ is also a reaction, then we say that $\R_j$ is \emph{reversible} and we write $\sum_{i=1}^{n_s} \alpha_{ij} S_i  \rightleftharpoons \sum_{i=1}^{n_s} \beta_{ij} S_i$. 
 	
 	\paragraph*{Kinetics}The kinetics of the BIN can be defined by introducing 
 	a non-negative state vector $x = [x_1,x_2,\ldots,x_{n_s}]^T$ quantifying the concentration of each species and a choice of kinetics, i.e., a functional expression for the rates at which the corresponding reaction takes place:
 	$
 	 {R (\cdot)}: \mathbb R_{\ge 0}^{n_s} \rightarrow \mathbb R_{\ge 0}^{n_r}. 
 	$
 	The function  {$R(\cdot)$} can take many forms, and we assume that it satisfies basic  {smoothness and} monotonicity requirements defined as follows:
 	\begin{enumerate}
 		\item[\bf A1.] $R_j(x)$ is continuously differentiable, $j=1,..,n_r$;
 		\item[\bf A2.] if $\alpha_{ij}>0$, then $x_i=0$ implies  $R_j(x)=0$; %
 		\item[\bf A3.] ${\partial R_j}/{\partial x_i}(x) \ge 0$ if $\alpha_{ij}>0$ and ${\partial R_j}/{\partial x_i}(x)\equiv 0$ if $\alpha_{ij}=0$;
 		\item[\bf A4.] The inequality in   {A3} holds strictly for all positive concentrations, i.e when $x \in \mathbb R_+^n$.
 	\end{enumerate}
 	
 	Condition A2 represents the fact that a reaction cannot occur when any of its reactants is missing. Conditions A3 and A4  require that, at least in the interior of the positive orthant, rates be strictly monotone functions of reactants' concentrations. 
 	Furthermore, A3  specifies that only reactants can influence the rate of any reaction.
 	If a reaction rate $R$ satisfies A1-4 we say that it is \emph{admissible}. The set of all admissible reaction rates of a given BIN $(\mathcal S,\mathcal R) $ is denoted by $\mathscr K_{\mathcal S,\mathcal R }$.
 	
 	A typical choice of kinetics are the so called \emph{Mass-Action} kinetics,
 	which correspond to the following polynomial expression:
 	\begin{equation}
 		\label{ma}
 		R_j(x) = k_j \prod_{i=1}^{n_s} x_i^{\alpha_{ij}}, 
 	\end{equation}
 	for some constant parameter $k_j>0$ and with the convention that $a^0=1$ for all $a \in \mathbb{R}$.    
 	
 	\paragraph*{Dynamics} the dynamical system associated to the BIN is by definition:
 	\begin{equation}
 		\label{crnsys}
 		\dot{x} = \Gamma R(x).
 	\end{equation}
 	This is a (generally) nonlinear,  positive system, meaning that solutions have non-negative coordinates given that the initial conditions do.  For each initial condition $x_0$,
 	the affine space $\mathcal C_{x_0}=x_0 + \textrm{Im} [\Gamma]$  is so that the corresponding solution
 	$\varphi(t,x_0)$ belongs  {to $\mathcal C_{x_0}$} for all $t \geq 0$, i.e., $\mathcal C_{x_0}$ is forward invariant. Hence, the system dimension is often reduced by taking into account an independent set of conservation laws (viz. vectors in $\textrm{Ker}( \Gamma^{T} )$) and regarding the flow induced by (\ref{crnsys}) 
 	as parametrized by the total amount of each conservation law, and evolving on a lower dimensional space defined by the corresponding stoichiometry class. 
 	This is the approach that we will pursue also throughout this paper. In particular,
 	we will choose a basis for $\textrm{Ker} ( \Gamma^T)$  {(assumed of dimension $c$) as } $\{v_1,v_2, \ldots, v_c \}$
 	and complete it to a basis of $\mathbb{R}^n$, $\{v_1,v_2, \ldots v_c, v_{c+1}, \ldots, v_n\}$ so that, defining the matrix:
 	\[  T = [v_1,v_2, \ldots, v_n ]^T    \]
 	we may define the system in $\tilde{x}$ coordinates according to $\tilde{x} =T x$.
 	Accordingly the new equations read:
 	\begin{equation}\label{odeT}\dot{\tilde{x}} =   T \Gamma R (T^{-1} \tilde{x} ) = \left [ \begin{array}{c} 0_{n_c} \\
 			\Gamma_r R(T^{-1} \tilde{x}) \end{array} \right ],
 	\end{equation}
 	where $\Gamma_r = [v_{c+1}, \ldots, v_{n}]^T \Gamma$ is a reduced stoichiometry matrix.
 	Of course, the natural state-space in $\tilde{x}$ coordinates, (i.e., $T \mathbb R_{\ge 0}^n$) is not necessarily the  positive orthant, but possibly a subset of it (as the individual vectors $v_i$, $i=1,\ldots, n$ are often chosen to be non-negative).
 	Notice that, the vector $\tilde{x}$ can be partitioned according to $[ \tilde{x}_c, 
 	\tilde{x}_{d}]$ where $\tilde{x}_c$ corresponds to the first $c$ components of $\tilde{x}$ (which are constant along solutions) while $\tilde{x}_d$ corresponds to the remaining $n-c$ coordinates evolving according to non-trivial dynamics.
 	
 	\paragraph*{Siphons}   Since \eqref{crnsys} evolves on the positive orthant, certain trajectories might approach the boundary of the orthant asymptotically, i.e., some species might go extinct. If no species becomes extinct for every positive initial state, then the dynamical system is said to be \emph{persistent}. In order to characterize persistence graphically, we need some definitions. Let $P\subset \mathcal S$ be a nonempty set of species. A reaction $\R_j \in \mathcal R$ is said to be an input reaction to $P$ if there exists $S_i \in P$ such that $\beta_{ij}>0$, while a reaction $\R_j \in \mathcal R$ is said to be an output reaction to $P$ if there exists $S_i \in P$ such that $\alpha_{ij}>0$. Then, the set $P$ is called a \emph{siphon} if each input reaction associated to $P$ is also an output reaction associated to $P$ \cite{persistence}. The species that correspond to the support of a non-negative conservation law automatically constitute a siphon. Hence, any siphon that contains the support of a conservation law is said to be \emph{trivial}. If a siphon is not trivial, then it is said to be \emph{critical}. If a BIN has no critical siphons, then \eqref{crnsys} is persistent for any choice of monotone kinetics \cite{persistence}.
 	
 	\paragraph*{Graphical representation} A BIN can be represented as a graph in various ways. We adopt the Petri-net representation \cite{petri08,persistence}, which is also equivalent to a species-reaction graph \cite{craciun06}. For a given BIN, species correspond to places, while reactions correspond to transitions. The incidence matrix of the Petri-net is simply the stoichiometry matrix $\Gamma$. An example will be discussed next.
 	
 	\paragraph*{Example} Referring to the motivational example \eqref{ptm}-\eqref{ptmRL}, the reactions are ordered as:
 	\begin{equation}\label{ptm_open}
 		L+ {Rc}   \xrightleftharpoons[\R_2]{\R_1} K, \ 
 		S+K \xrightleftharpoons[\R_4]{\R_3} C \lra^{\R_5} P+K, \ P \lra^{\R_6} S. 
 	\end{equation}
 	The concentrations $x_1,..,x_6$ correspond to the species $L, {Rc},K,S,C,P$, respectively. The ODE can be written as:
 	\begin{equation} \dot x = \left [ \begin{array}{rrrrrr} -1 & 1 &0 & 0 & 0& 0   \\ -1 & 1 & 0 & 0 & 0 & 0   \\ 1 & -1 & -1 & 1 & 0  & 0   \\ 0 & 0 & -1 & 1 & 1 & 1  \\ 0 & 0 & 1 & -1 & -1 & 0 \\ 0 & 0 & 0 & 0 & 1 & -1 \end{array} \right ]  \left [ \begin{array}{l} R_1(x_1,x_2) \\ R_2(x_3) \\ R_3(x_3,x_4) \\ R_4(x_5) \\ R_5(x_5) \\ R_6(x_6) \end{array} \right ] \label{ptmRL.ode}
 	\end{equation}
 	where the rates $R_j, j=1,..,6$ satisfy the Assumptions A1-4. Beyond these assumptions we don't assume that anything is known about them.  The Petri-net graph of the network is depicted in Figure \ref{f.ptm}-a). 
 	
 	The BIN \eqref{ptm_open} has   three conserved quantities which are the total receptor, the total ligand, and the total substrate. The corresponding conservation laws can be written as: $x_1+x_3+x_5=x_{1,tot}$, $x_2+x_3+x_5=x_{2,tot}$, $x_4+x_5+x_6=x_{3,tot}$.  Note that the network has no critical siphons and hence it is persistent. The conservation laws can be used to reduce the equation above from a six-dimensional to a three-dimensional ODE. For instance, we can choose the independent variables to be $x_1,x_3,x_6$ (corresponding to $L,K,P$). Hence $T$ can be written as:
 	\begin{equation}\label{e.T}T=\begin{bmatrix} 1 & 0 & 1 & 0 & 1 & 0 \\ 0 & 1 & 1 & 0  & 1 & 0 \\  0 & 0 & 0 & 1 & 1 & 1 \\ 1 & 0 & 0 & 0 & 0 & 0 \\ 0 & 0 & 1 & 0 & 0 & 0 \\  0 & 0 & 0 & 0 & 0 & 1    \end{bmatrix}. \end{equation}
 	For given  total positive conserved quantities $x_{1,tot},x_{2,tot},x_{4,tot} >0$, we obtain in this manner an ODE for the evolution of $\tilde x_d(t)=[x_1,x_3,x_6]^T(t)$, as follows:
 	\begin{equation}\label{red_example}\dot{\tilde x}_d= \left [ \begin{array}{rrrrrr} -1 & 1 &0 & 0 & 0& 0  \\ 1 & -1 & -1 & 1 & 0  & 0  \\ 0 & 0 & 0 & 0 & 1 & -1 \end{array} \right ] R({\tilde x}_d), \end{equation}
 	where 
 	\[ R({\tilde x}_d)= \left [ \begin{array}{l} R_1(x_1,x_1-x_{1,tot}+x_{2,tot}) \\ R_2(x_3) \\ R_3(x_3,x_{4,tot}-x_{1,tot}+x_1+x_3-x_6) \\ R_4(x_{1,tot}-x_1-x_3) \\ R_5(x_{1,tot}-x_1-x_3) \\ R_6(x_6) \end{array} \right ] .\]
 	 {While, as is well known, this change of coordinates conveniently achieves a dimensionality reduction of the underlying dynamics, it plays a crucial role in enabling the analysis of BINs by using second additive compound matrices. This is so because structural zero eigenvalues of the Jacobian are removed, opening up the possibility of establishing uniform exponential convergence of the associated variational equations.}   
 	\subsection{Lyapunov criteria for robust non-oscillation of BINs}
 	We apply the concept of non-oscillation to BINs. Since the reaction rates are not assumed to be known beyond satisfying assumptions A1-4, we aim at establishing a notion of \emph{robust non-oscillation}. 
 	
 	\begin{definition} Let a BIN $(\mathcal S, \mathcal R)$ be given. We say that it is {robustly non-oscillatory} if the associated dynamical system \eqref{crnsys} is non-oscillatory for every choice of kinetics $R \in \mathscr K_{\mathcal S,\mathcal R}$.
 	\end{definition}

 	We aim at proving the non-oscillatory nature of the %
 	dynamics by embedding the
 	variational equation associated to the second additive compound matrix within a linear differential inclusion. This is reminiscent of our approach for treating robust global stability for BINs \cite{MA_cdc14},\cite{plos}.  To this end, take the Jacobian of the $\tilde{x}$-dynamics \eqref{odeT}, as the principal submatrix of indices $\{c+1,\ldots n\}$:
 	\begin{equation}
 		\label{expressionJr}
 		J_r ( \tilde{x} )= \left [ T \Gamma \frac{ \partial R}{\partial x}  T^{-1} \right ]_{c+1,\ldots,n}= \Gamma_r 
 		\frac{ \partial R}{\partial x} T^{-1} \left [ \begin{array}{c} 0 \\ I_{n-c} \end{array} \right ]. 
 	\end{equation}
 	Accordingly, the variational equation associated to (\ref{crnsys}) can be rewritten
 	as:
 	\begin{equation}
 		\label{2varcrn}
 		\begin{array}{l}
 			\dot{\tilde{x}}_c = 0, \\
 			\dot{\tilde{x}}_n = \Gamma_r R ( T^{-1} \tilde{x} ), \\
 			{\dot\delta}^{(2)} = J_r ( \tilde{x} )^{(2)} \,  \delta^{(2)},
 		\end{array}
 	\end{equation}
 	which has the advantage of a smaller $\delta^{(2)}$ variable, of dimension $N:={ {n-c} \choose 2 }$, driven by a $(n-c)$-dimensional flow,
 	parametrized by the initial condition $\tilde{x}_c (0)$.  
 	As in classical embedding approaches, \cite{MA_cdc14},\cite{blanchini},\cite{plos}, one may write $J_r$ as a positive combination of rank-one stable matrices, where each matrix corresponds to a \emph{reaction-reactant pair}. The set of all such pairs is denoted as: \begin{equation}\label{e.P} \mathcal P=\{(j,i)|S_i~\mbox{participates in the reaction}~\R_j\}.\end{equation}
 	Let $s$ be the cardinality of $\mathcal P$. Then, 
 	\[ \frac{ \partial R}{\partial x} = \sum_{i,j} e_j e_i^T \frac{ \partial R_j}{\partial x_i}=\sum_{\ell=1}^s \rho_\ell(t) e_j e_i^T, \]
 	where $\rho_\ell :=   \partial R_{j_\ell}/\partial x_{i_\ell}, (j_\ell, i_\ell) \in \mathcal P, \ell=1,..,s$.
 	By substitution into (\ref{expressionJr}), we get:
 	\begin{equation} \label{ldi_embedding}
 		J_r = \sum_{\ell=1}^s \rho_\ell \left ((\Gamma_r e_j) ( e_i^T T^{-1}  \left [ \begin{array}{c} 0 \\ I_{n-c} \end{array} \right ]  )
 		\right )=: \sum_{\ell=1}^s \rho_\ell A_\ell. \end{equation}
 	Since the second additive compound is \emph{linear} in the entries of the original matrix, we get
 	\begin{equation}\label{exprembedding}
 		J_r^{(2)} = \sum_{\ell=1}^s \rho_\ell A_\ell^{(2)}. \end{equation}
 	Therefore, we study  (\ref{2varcrn})
 	by studying the LDI:
 	\begin{equation} 
 		\label{secondvarsw}
 		\dot{\delta^{(2)}} (t) \in \textrm{cone} \{ A_1^{(2)}, \ldots, A_s^{(2)} \} \delta^{(2)} (t) ,
 	\end{equation}
 	where $A_i$ are the corresponding rank one matrices as in  {(\ref{ldi_embedding})}.
 	The main result for this section is a theorem to guarantee uniform exponential stability of the $\delta^{(2)}$-subsystem in (\ref{2varcrn}) so that one may apply 
 	Theorem \ref{muld} with ease to BINs with uncertain kinetics.
 	\begin{theo}
 		\label{expcrn}
 		Let a BIN $(\mathcal S,\mathcal R)$ be given, and assume that it does not have critical siphons. 
 		Assume that the associated LDI (\ref{secondvarsw}) admits a 
 		PWL common Lyapunov function as in (\ref{convexlinearlyap})
 		fulfilling the additional conditions (\ref{lasass}). Then,  for any compact
 		$K \subset (0,+\infty)^n$ there exist $M, \lambda >0$, such that
 		for all $\tilde{x} (0) \in T K$ (i.e., the image of $K$ under the linear map $T$) and all $\delta^{(2)} (0) \in \mathbb{R}^{N}$ the corresponding solutions of (\ref{2varcrn}) fulfill
 		\[ | \delta^{(2)} (t) | \leq M e^{- \lambda t } |\delta^{(2)} (0)| \qquad \forall t \geq 0. \] 
 	\end{theo}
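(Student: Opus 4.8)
The plan is to deduce the estimate from Theorem \ref{theolasalle} by showing that, along every trajectory issued from $TK$, the time-varying matrix driving the $\delta^{(2)}$-subsystem of (\ref{2varcrn}) remains, for \emph{all} $t\ge 0$, inside a single persistently-excited cone $\mathcal A_\varepsilon$. By the embedding (\ref{exprembedding}) we have $J_r(\tilde x(t))^{(2)}=\sum_{\ell=1}^s\rho_\ell(t)A_\ell^{(2)}$, where $\rho_\ell(t)=(\partial R_{j_\ell}/\partial x_{i_\ell})(x(t))$ and $x(t)=\varphi(t,x_0)$; hence $t\mapsto\delta^{(2)}(t)$ is automatically a solution of the LDI (\ref{secondvarsw}), and the only missing ingredient is a \emph{uniform} positive lower bound on the coefficients $\rho_\ell$.

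First I would localise the base flow. Fix a compact $K\subset(0,+\infty)^n$ and set $\Omega:=\overline{\{\varphi(t,x_0):t\ge 0,\ x_0\in K\}}$. Each trajectory is confined by the conservation laws to a compact stoichiometric class, so $\Omega$ is bounded, and being closed by definition it is compact. The structural hypothesis enters here: since the BIN has no critical siphons, the dynamics is persistent \cite{persistence}, so no $\omega$-limit point can lie on $\partial\mathbb{R}^n_{\ge 0}$; interior initial data moreover keep trajectories in $(0,+\infty)^n$ for all finite time. Combining these over the compact family of initial conditions in $K$ yields $\Omega\subset(0,+\infty)^n$, i.e. the entire forward orbit of $K$ is trapped in a compact subset of the open orthant. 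I expect this to be the crux of the argument: pointwise persistence only pins down each individual $\omega$-limit set, whereas the conclusion $\Omega\subset(0,+\infty)^n$ is a \emph{uniform} (in the initial condition) persistence statement, and making it rigorous is exactly where the siphon-free topology must be exploited.

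Granting $\Omega\subset(0,+\infty)^n$ compact, Assumption A4 gives the uniform excitation bound: since each $\partial R_{j_\ell}/\partial x_{i_\ell}$ is strictly positive throughout the open orthant and $\Omega$ is a compact subset of it, $\varepsilon:=\min_{x\in\Omega}\min_{1\le\ell\le s}(\partial R_{j_\ell}/\partial x_{i_\ell})(x)>0$. Therefore $\rho_\ell(t)\ge\varepsilon$ for every $t\ge 0$ and every $x_0\in K$, whence $J_r(\tilde x(t))^{(2)}=\sum_\ell\rho_\ell(t)A_\ell^{(2)}\in\mathcal A_\varepsilon$, where $\mathcal A_\varepsilon$ is the set (\ref{A_e}) built from the matrices $A_\ell^{(2)}$ with $N={n-c\choose 2}$. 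Consequently $\delta^{(2)}(\cdot)$ is, from $t=0$ on, a genuine solution of the PELDI (\ref{peldi}) for this one value of $\varepsilon$; note that no separate treatment of an initial transient is needed, precisely because the confinement holds for all $t\ge0$.

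It remains to invoke the Lyapunov hypotheses. By assumption the LDI (\ref{secondvarsw}) admits a PWL common Lyapunov function $V$ of the form (\ref{convexlinearlyap}) satisfying (\ref{lasass}); these are exactly the hypotheses of Theorem \ref{theolasalle}, which therefore guarantees that the PELDI (\ref{peldi}) is uniformly exponentially stable for this $\varepsilon$. Unwinding the definition of uniform exponential stability, there exist $M,\lambda>0$, depending only on $\varepsilon$ and the fixed data $\{A_\ell^{(2)}\}$ and $V$ --- hence only on $K$ --- and independent of the particular solution, such that $|\delta^{(2)}(t)|\le M e^{-\lambda t}|\delta^{(2)}(0)|$ for all $t\ge0$. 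Since every trajectory of (\ref{2varcrn}) with $\tilde x(0)\in TK$ and arbitrary $\delta^{(2)}(0)\in\mathbb{R}^N$ is one such PELDI solution, this is precisely the claimed estimate, completing the proof once the uniform-persistence step of the second paragraph is secured.
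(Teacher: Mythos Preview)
Your strategy is correct and matches the paper's proof exactly: embed the $\delta^{(2)}$-equation into a PELDI with a uniform $\varepsilon>0$ by trapping the base flow in a compact subset of the open orthant, then invoke Theorem~\ref{theolasalle}. You also correctly identified the one nontrivial step, namely passing from pointwise persistence to \emph{uniform} persistence over the compact family $K$ of initial conditions, and you explicitly left it open.

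The paper fills precisely this gap, and it is worth knowing how, because the standard siphon lemma from \cite{persistence} only controls $\omega(x_0)$ for a single $x_0$ and does not immediately give $\Omega\subset(0,+\infty)^n$. The paper proves an upgraded siphon lemma (Lemma~\ref{siphonrevisited}): for any compact $K\subset(0,+\infty)^n$, any boundary point of $\omega(K)$ (the $\omega$-limit set of the \emph{set} $K$, which in general strictly contains $\bigcup_{x_0\in K}\omega(x_0)$) has zero coordinates forming a siphon. Combined with the absence of critical siphons and the fact that $\omega(K)$ is compact and uniformly attracts $K$ (Lemma~\ref{deep}), this yields Lemma~\ref{eventuallyK}: there exist $\varepsilon>0$ and a compact $\tilde K\subset[\varepsilon,+\infty)^n$ with $\varphi(t,K)\subset\tilde K$ for all $t\ge0$. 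This is exactly your claim $\Omega\subset(0,+\infty)^n$ compact, and from there your argument is complete. One small caveat: your assertion that ``each trajectory is confined by the conservation laws to a compact stoichiometric class'' is not automatic for a general BIN; the paper's Lemma~\ref{eventuallyK} takes uniform boundedness of solutions as an explicit hypothesis, so you are implicitly assuming it as well.
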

 	
 	The remainder of this section is dedicated to the proof of Theorem \ref{expcrn}. To that end,  we need to introduce some additional concepts and an improved version of the so called \emph{Siphon Lemma}, to be defined below.
 	For a compact set $K$, we denote the corresponding $\omega$-limit set as:
 	\begin{align*}
 		& \omega(K) \\ &= \{ x \in \mathbb{R}^n: \exists \, t_{n} \rightarrow + \infty, x_{n} \in K:\lim_{n \rightarrow + \infty} \varphi(t_n, x_n) = x \}.
 	\end{align*}
 	Notice that by construction this set contains $\bigcup_{x_0 \in K} \omega(x_0)$.
 	It is, however, a potentially bigger set.
 	For this reason the following is an improved version of the siphon Lemma, \cite{persistence,anderson}.
 	\begin{lemma}
 		\label{siphonrevisited}
 		Let $K \subset (0,+\infty)^n$ be compact and assume that $y \in \partial (0,+\infty)^n \cap \omega(K)$. Then, $\{S_i \in \mathcal{S}: y_i = 0 \}$ is a siphon. 	
 	\end{lemma}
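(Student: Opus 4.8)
The plan is to argue by contradiction, combining the invariance of $\omega(K)$ with the nonnegativity of the reaction rates at the boundary point $y$. Write $P := \{ S_i \in \mathcal{S} : y_i = 0 \}$, and recall that $\R_j$ is an \emph{output} (resp.\ \emph{input}) reaction to $P$ exactly when $\alpha_{ij}>0$ (resp.\ $\beta_{ij}>0$) for some $S_i \in P$. Suppose $P$ is not a siphon: then there is a reaction $\R_{j_0}$ that is an input but not an output reaction to $P$, i.e.\ $\beta_{i_0 j_0}>0$ for some $S_{i_0}\in P$, while $\alpha_{i j_0}=0$ for every $S_i \in P$. Two preliminary facts drive the argument. \textbf{(i)} Since $\dot x=\Gamma R(x)$ is a positive system and the forward orbit of the compact set $K\subset(0,+\infty)^n$ is bounded (the conservation laws confine each trajectory to a compact subset of the nonnegative orthant), the set $\omega(K)$ is nonempty, compact, contained in $[0,+\infty)^n$, and \emph{invariant}; in particular $\varphi(\tau,\omega(K))=\omega(K)$ for every $\tau\ge 0$. \textbf{(ii)} Any reaction $\R_j$ all of whose reactants are positive at $y$ satisfies $R_j(y)>0$.

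To establish fact (ii), recall from A3 that $R_j$ depends only on its reactant coordinates, so $R_j(y)=R_j(z)$ for any strictly positive $z\in\mathbb R_+^n$ agreeing with $y$ on all reactant coordinates of $\R_j$. Fixing one reactant index $i$ (so $\alpha_{ij}>0$) and letting its coordinate decrease from $z_i$ toward $0$ while keeping the remaining coordinates at the positive values of $z$, the path stays in $\mathbb R_+^n$ except at the endpoint; hence A4 gives $\partial R_j/\partial x_i>0$ along it, and since $R_j$ vanishes when that coordinate is $0$ (A2) and is continuous (A1), we conclude $R_j(z)>0$.

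The core step is to show that the produced species $S_{i_0}$ forces a contradiction. By invariance there is a point $y^-\in\omega(K)$ and a time $\tau>0$ with $\varphi(\tau,y^-)=y$; along the trajectory $t\mapsto\varphi(t,y^-)$ the $i_0$-th coordinate is nonnegative and vanishes at $t=\tau$, so $t=\tau$ is an interior local minimum and $\dot x_{i_0}(\tau)=0$. Evaluating the dynamics at $y$,
\[
0=\bigl(\Gamma R(y)\bigr)_{i_0}=\sum_{j}\,(\beta_{i_0 j}-\alpha_{i_0 j})\,R_j(y).
\]
Every output reaction to $P$ consumes some $S_{i'}\in P$ with $y_{i'}=0$ and hence vanishes at $y$ by A2, while every non-output reaction $\R_j$ has $\alpha_{i_0 j}=0$ because $S_{i_0}\in P$. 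Writing $\mathcal N$ for the set of indices of non-output reactions, the identity therefore collapses to $\sum_{j\in\mathcal N}\beta_{i_0 j}\,R_j(y)=0$, a sum of nonnegative terms, so each term vanishes. But $j_0\in\mathcal N$ (as $\R_{j_0}$ is not an output reaction), its reactants are all outside $P$ and hence positive at $y$, so fact (ii) gives $R_{j_0}(y)>0$; combined with $\beta_{i_0 j_0}>0$ this makes the $j_0$-term strictly positive, a contradiction. Hence $P$ is a siphon.

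The main obstacle I anticipate is the justification of fact (i): establishing that $\omega(K)$ is genuinely invariant requires precompactness of the forward orbit of $K$, which must be extracted from the conservation-law structure rather than merely assumed, and it is precisely this invariance (the existence of the preimage $y^-$) that upgrades the naive one-sided boundary inequality $\dot x_{i_0}\ge 0$ into the equality $\dot x_{i_0}=0$ on which the whole contradiction rests. A secondary technical point is the care needed in fact (ii), where A4 supplies strict monotonicity only at fully positive states and must be applied along an interior segment; the degenerate case of reactions with no reactants (inflows) is the one place the argument could formally break, but it is harmless since a strictly positive inflow to $S_{i_0}$ would already make $\dot x_{i_0}(y)>0$, contradicting $y_{i_0}=0\in\omega(K)$.
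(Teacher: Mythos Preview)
Your core computation---that if $P=\{S_i:y_i=0\}$ fails to be a siphon then there exists $i_0\in P$ with $f_{i_0}(y)=(\Gamma R(y))_{i_0}>0$---is correct and is in fact exactly the starting point of the paper's proof (the paper simply asserts ``there exists $j\in\mathcal S$ with $f_j(y)>0$''; your decomposition into output and non--output reactions, together with your fact~(ii), supplies the justification the paper omits).

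Where your argument departs from the paper is in converting $f_{i_0}(y)>0$ into a contradiction, and here there is a genuine gap, precisely at the point you flagged. You need \emph{backward} invariance of $\omega(K)$ to place $y$ at an interior time of a trajectory contained in $[0,+\infty)^n$ and deduce $\dot x_{i_0}(y)=0$. Backward invariance follows from Lemma~\ref{deep} only under precompactness of the forward orbit of $K$, which is \emph{not} among the hypotheses of Lemma~\ref{siphonrevisited}. Your proposed remedy---extracting boundedness from conservation laws---does not work for general BINs (e.g.\ networks with inflow reactions $\emptyset\to S$ have unbounded orbits, yet the lemma is still claimed for them). So your route proves only a weaker statement, one that does suffice for the application in Lemma~\ref{eventuallyK} where boundedness is explicitly assumed, but not for Lemma~\ref{siphonrevisited} as stated.

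The paper avoids invariance altogether by a direct quantitative estimate. From $f_{i_0}(y)>0$ and continuity it fixes $\varepsilon,\delta>0$ with $f_{i_0}\ge\delta$ on $B_\varepsilon(y)$ and $B_\varepsilon(y)\cap K=\emptyset$, and lets $M$ bound $|f|$ there. Any trajectory from $K$ that reaches $B_{\varepsilon/2}(y)$ must have spent time at least $\varepsilon/(2M)$ in $B_\varepsilon(y)$ on the way in, during which its $i_0$-th coordinate grew by at least $\delta\varepsilon/(2M)$; since $y_{i_0}=0$, this keeps the trajectory at distance $\ge\min\{\varepsilon/2,\,\delta\varepsilon/(2M)\}$ from $y$ for all $t\ge0$, uniformly in the initial point $\xi\in K$. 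Hence $y\notin\omega(K)$, with no appeal to properties of $\omega(K)$ itself.
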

 	We recall that the original siphon lemma only states this property for $K$ being a singleton.   We prove it in the Appendix for the case of compact sets {, thus generalizing} the proof presented in \cite{anderson}. This opens up the possibility of achieving structural criteria for uniform persistence in BINs.
 	In fact, (see \cite{hale}, pag. 8), the following holds for $\omega(K)$:
 	\begin{lemma}
 		\label{deep}
 		Consider a continuous flow and a compact set $K$, such that 
 		$\textrm{cl} \left ( \bigcup_{t \geq 0} \varphi(t,K)  \right )$ is bounded.
 		Then $\omega(K)$ is non-empty, compact, invariant and uniformly attracts $K$. 
 	\end{lemma}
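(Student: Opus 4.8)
The plan is to set $B := \textrm{cl}\left(\bigcup_{t \ge 0} \varphi(t,K)\right)$ and exploit that, by hypothesis, $B$ is bounded; being closed by construction, $B$ is therefore compact. Every trajectory $\varphi(t,x)$ with $x \in K$ and $t \ge 0$ remains in $B$, and this single fact — compactness of $B$ together with continuity of the flow — drives all four conclusions. The decisive structural feature is the \emph{setwise} definition of $\omega(K)$, which allows the base points $x_n \in K$ to vary rather than being frozen at a single initial condition; this is exactly what will turn the uniform-attraction claim into a compactness argument.

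For non-emptiness and compactness, I would first fix any $x_0 \in K$ and any $t_n \to +\infty$: the points $\varphi(t_n, x_0) \in B$ admit a convergent subsequence by Bolzano--Weierstrass, whose limit lies in $\omega(K)$, so $\omega(K) \ne \emptyset$. Boundedness of $\omega(K) \subset B$ is immediate, so it remains to show $\omega(K)$ is closed. Given $y_m \in \omega(K)$ with $y_m \to y$, for each $m$ I choose $t^{(m)} > m$ and $x^{(m)} \in K$ with $|\varphi(t^{(m)}, x^{(m)}) - y_m| < 1/m$; then $\varphi(t^{(m)}, x^{(m)}) \to y$ along $t^{(m)} \to +\infty$, whence $y \in \omega(K)$. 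Thus $\omega(K)$ is compact.

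For invariance I would establish $\varphi(s, \omega(K)) = \omega(K)$ for every $s \ge 0$. The forward inclusion is immediate from continuity: if $\varphi(t_n, x_n) \to y$ then $\varphi(s + t_n, x_n) = \varphi(s, \varphi(t_n, x_n)) \to \varphi(s,y)$ with $s + t_n \to +\infty$, so $\varphi(s,y) \in \omega(K)$. For the reverse inclusion, given $y \in \omega(K)$ with $\varphi(t_n, x_n) \to y$ and $s > 0$, I consider the points $\varphi(t_n - s, x_n) \in B$ for $n$ large; extracting a convergent subsequence $\varphi(t_{n_k} - s, x_{n_k}) \to z$ (where $t_{n_k} - s \to +\infty$, so $z \in \omega(K)$) and invoking continuity gives $\varphi(s, z) = \lim_k \varphi(t_{n_k}, x_{n_k}) = y$. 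Hence $y \in \varphi(s, \omega(K))$, and the two inclusions yield invariance. Compactness of $B$ is essential here, as it is what licenses the subsequence extraction in the backward step.

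Finally, uniform attraction — by which I mean $\sup_{x \in K} \textrm{dist}(\varphi(t,x), \omega(K)) \to 0$ as $t \to +\infty$ — I would prove by contradiction. If it failed, there would exist $\epsilon > 0$, times $t_n \to +\infty$, and points $x_n \in K$ with $\textrm{dist}(\varphi(t_n, x_n), \omega(K)) \ge \epsilon$. Since $\varphi(t_n, x_n) \in B$, a convergent subsequence has a limit $y$, which by definition belongs to $\omega(K)$; but continuity of the map $x \mapsto \textrm{dist}(x, \omega(K))$ then forces $\textrm{dist}(\varphi(t_{n_k}, x_{n_k}), \omega(K)) \to \textrm{dist}(y, \omega(K)) = 0$, contradicting the lower bound $\epsilon$. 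I expect the conceptual crux to be precisely this step: it is the setwise definition of $\omega(K)$ that makes the limit point $y$ automatically a member of $\omega(K)$, collapsing an apparent uniformity requirement into a one-line compactness argument; for pointwise $\omega$-limit sets the same claim would fail without the freedom to move the base points $x_n$ within $K$.
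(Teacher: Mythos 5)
Your proof is correct. There is, however, no internal proof to compare against: the paper states Lemma \ref{deep} without proof, quoting it from the reference \cite{hale} (p.~8), so your write-up fills a gap rather than paralleling an argument in the text. What you give is essentially the classical proof from that literature, specialized to the finite-dimensional setting, where boundedness of $B=\textrm{cl}\left(\bigcup_{t\ge 0}\varphi(t,K)\right)$ together with closedness yields compactness by Heine--Borel; in the infinite-dimensional setting of \cite{hale} one must instead assume precompactness of the orbit, and your repeated Bolzano--Weierstrass extractions are exactly where that hypothesis enters. All four conclusions are handled soundly: the approximation argument for closedness of $\omega(K)$; the two-sided invariance, where the backward inclusion via $\varphi(t_{n_k}-s,x_{n_k})\to z$ correctly rests on compactness of $B$ and uses only times $t\ge 0$ --- appropriate here, since the paper's state space $X$ is only forward invariant, so $\varphi(s,\omega(K))=\omega(K)$ for all $s\ge 0$ is the right (and strongest available) notion of invariance for the semiflow; and the contradiction argument for uniform attraction. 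You also correctly identify the conceptual crux: the setwise definition of $\omega(K)$, with base points $x_n\in K$ allowed to vary, is what places the limit $y$ in $\omega(K)$ and makes uniform attraction a one-line compactness argument --- this is precisely the distinction the paper itself emphasizes between $\omega(K)$ and the potentially smaller, possibly non-compact set $\bigcup_{x_0\in K}\omega(x_0)$, and it is the property exploited in the proof of Lemma \ref{eventuallyK}. The only (cosmetic) caveat is that nonemptiness tacitly assumes $K\neq\emptyset$.
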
	
 	We are specifically interested in compactness of $\omega(K)$. This is crucial, since the property doesn't necessarily hold for $\bigcup_{x_0 \in K} \omega(x_0)$. 
 	Our main result hinges upon the following Lemma of independent interest.
 	\begin{lemma}
 		\label{eventuallyK}
 		Consider a chemical reaction network with uniformly bounded solution,  {i.e.,}
 		for all compact $K \subset \mathbb{R}_{\geq 0}^n$, there exists  $\tilde{K}$ compact such that $\varphi(t, K) \subset \tilde{K}$ for all $t \geq 0$.
 		Assume that all siphons are trivial.
 		Hence, 
 		for any compact $K \subset (0,+\infty)^n$, there exist $\varepsilon>0$ and
 		$\tilde{K}$ compact in $[\varepsilon,+ \infty)^n$ such that
 		$\varphi (t, K) \in \tilde{K}$ for all $t \geq 0$.
 	\end{lemma}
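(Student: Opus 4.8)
The plan is to deduce the conclusion as a \emph{uniform persistence} statement, combining the compactness and attraction properties of $\omega(K)$ furnished by Lemma \ref{deep} with the improved Siphon Lemma \ref{siphonrevisited} and the absence of nontrivial siphons. First I would fix a compact $K \subset (0,+\infty)^n$ and invoke the uniform boundedness hypothesis to obtain a compact $\tilde K_0$ with $\varphi(t,K) \subset \tilde K_0$ for all $t \geq 0$; in particular $\textrm{cl}\big(\bigcup_{t\geq 0}\varphi(t,K)\big)$ is bounded, so Lemma \ref{deep} applies and yields that $\omega(K)$ is non-empty, compact, invariant, and uniformly attracts $K$.

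The crux of the argument is to show that $\omega(K)$ does not meet the boundary of the positive orthant. Suppose, for contradiction, that some $y \in \omega(K) \cap \partial(0,+\infty)^n$ exists. Then $P := \{ S_i : y_i = 0\}$ is non-empty, and by Lemma \ref{siphonrevisited} it is a siphon. Since by hypothesis every siphon is trivial, $P$ contains the support of some non-negative conservation law $v \in \textrm{Ker}(\Gamma^T)$, $v \geq 0$, $v \neq 0$, with $\supp(v) \subseteq P$. By definition of $\omega(K)$ there exist $t_m \to +\infty$ and $x_m \in K$ with $\varphi(t_m, x_m) \to y$; conservation gives $v^T \varphi(t_m, x_m) = v^T x_m$ for all $m$, while $v^T y = 0$ because $y_i = 0$ on $\supp(v) \subseteq P$. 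Passing to the limit forces $v^T x_m \to 0$. On the other hand, compactness of $K$ inside the open orthant provides $\delta > 0$ with $(x_m)_i \geq \delta$ for every coordinate, so $v^T x_m \geq \delta \|v\|_1 > 0$ uniformly in $m$ --- a contradiction. Hence $\omega(K) \subset (0,+\infty)^n$.

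With $\omega(K)$ compact and contained in the open orthant, I would set $\varepsilon_1 := \tfrac12 \min_{y \in \omega(K),\, i} y_i > 0$, so that $\omega(K) \subset [2\varepsilon_1, +\infty)^n$ and the open $\varepsilon_1$-neighborhood $U$ of $\omega(K)$ is contained in $[\varepsilon_1, +\infty)^n$. Uniform attraction then supplies a time $T > 0$ with $\varphi(t, K) \subset U \subset [\varepsilon_1, +\infty)^n$ for all $t \geq T$. It remains to bound the trajectory away from the boundary on the compact interval $[0,T]$, which I would do via interior invariance implied by Assumption A2: writing $\dot x_i = \sum_j (\beta_{ij}-\alpha_{ij}) R_j(x)$ and noting that every consuming reaction ($\alpha_{ij}>0$) has $R_j$ vanishing when $x_i = 0$, so that by A1 and compactness of $\tilde K_0$ it obeys $R_j(x) \le L_j\, x_i$ on $\tilde K_0$, one obtains a constant $C$ with $\dot x_i \geq -C x_i$ along the trajectory; Gronwall then gives $x_i(t) \geq \delta\, e^{-CT} =: \varepsilon_2 > 0$ for $t \in [0,T]$. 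Taking $\varepsilon := \min(\varepsilon_1, \varepsilon_2)$ and $\tilde K := \tilde K_0 \cap [\varepsilon,+\infty)^n$ completes the proof.

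The main obstacle I expect is the boundary-avoidance step: the whole point of working with the set $\omega$-limit $\omega(K)$ rather than the union of pointwise $\omega$-limit sets is that its compactness and the \emph{uniform} attraction property are what convert a pointwise persistence statement into the single $\varepsilon$ we need, and this is exactly why the strengthened Siphon Lemma \ref{siphonrevisited} for compact sets --- rather than the classical singleton version --- is indispensable here. Care is also needed to ensure the conservation-law vector is genuinely non-negative and nonzero so that the lower bound $v^T x_m \geq \delta\|v\|_1$ is strictly positive, and to keep the finite-time estimate on $[0,T]$ uniform over $x_0 \in K$ by using the common Lipschitz constants available on $\tilde K_0$.
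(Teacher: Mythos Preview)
Your proposal is correct and follows essentially the same line as the paper's proof: invoke Lemma~\ref{deep} to get compactness and uniform attraction of $\omega(K)$, use Lemma~\ref{siphonrevisited} together with triviality of siphons and the conservation-law contradiction to exclude boundary points, then combine the uniform attraction for large times with a finite-time argument on $[0,T]$. The only substantive difference is that you spell out the finite-time step via a Gronwall estimate based on A1--A2, whereas the paper simply asserts that solutions are uniformly away from the boundary on compact time intervals.
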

 	\begin{proof}
 		Let  $K \subset (0,+\infty)^n$ be arbitrary. By assumption $\varphi(t,K)$ is uniformly bounded, hence by Lemma \ref{deep}, $\omega(K)$ is non-empty and compact.
 		Its intersection with $\partial [0,+\infty)^n$, on the other hand, is empty, since
 		any point $y \in \omega(K) \cap   	\partial [0,+\infty)^n$  
 		fulfills that $\{ S_i: y_i =0 \}$ is a siphon,  by virtue of Lemma \ref{siphonrevisited}.
 		By the triviality of siphons, in turn, this amounts to existence of a non-negative conservation law $v \neq 0$ such that $v^T y =0$. This contradicts definition of $y$ since,
 		$y = \lim_{n \rightarrow + \infty} \varphi (t_n, \xi_n)$ for $\xi_n \in K$ and
 		as a consequence:
 		\begin{align*} 0 = v^T y =  \lim_{n \rightarrow  \infty} v^T \varphi (t_n, \xi_n)  {= \lim_{n \rightarrow  \infty} } v^T \xi_n 
 			\geq \min_{\xi \in K} v^T \xi > 0. \end{align*}
 		As a consequence, $\varepsilon:=\min_{\xi \in \omega(K) } \min_i \xi_i >0$.
 		We see that $\omega(K) \subset [\varepsilon, +\infty)^n \cap \tilde{K}$, where 
 		$\tilde{K}$ is as in the statement of the Lemma.
 		Moreover, $\omega(K)$ uniformly attracts $K$, so that
 		there exists $T>0$ such that for all $t \geq T$,
 		$\varphi(t, K) \subset [\varepsilon/2, + \infty) \cap \tilde{K}$. 
 		Finally, combining this latter inclusion, with the fact that solutions
 		$\varphi(t,K)$ are uniformly away from the boundary over any compact interval, i.e for $t \in [0,T]$ we prove the claim.	
 	\end{proof}
 	We are now ready to prove Theorem \ref{expcrn}.
 	\begin{proof}
 		Let $K \subset (0,+\infty)^n$ be an arbitrary compact.
 		By Lemma \ref{eventuallyK}, there exist {s} $\varepsilon>0$ and
 		$\tilde{K}$ compact in $[\varepsilon,+ \infty)^n$ such that
 		$\varphi (t, K) \in \tilde{K}$ for all $t \geq 0$.
 		Hence, by the strict positivity assumption on $\frac{ \partial R}{\partial x}$
 		there exist $\varepsilon>0$, such that the $\delta^{(2)}$ component of the solutions of (\ref{2varcrn}) can be embedded in that of a PELDI as in (\ref{peldi}).
 		The Theorem follows thanks to the fulfillment of conditions (\ref{lasass}) and by virtue of Theorem \ref{theolasalle}. 
 	\end{proof}

 	\section{Construction and existence of PWL Lyapunov functions}
 	In this section, we provide a fast iterative method for constructing Lyapunov functions, and also interpret the LDI in discrete-time settings.
 	\subsection{A fast iterative construction algorithm}
 	 {Construction of PWL Lyapunov functions is a longstanding problem in systems and control \cite{brayton79,michel84}, and several iterative algorithms have been proposed \cite{blanchini96}. Along similar lines,} we have proposed an iterative algorithm for constructing PWL Lyapunov functions in 
 	our previous works \cite{MA_cdc13,MA_TAC,plos}  where the dynamics can be embedded in a rank-one LDI. Since the second compound matrices \eqref{secondvarsw} are of rank $N-1$, we will generalize the aforementioned approach to handle such cases. 
 	
 	The PWL function \eqref{convexlinearlyap} satisfies the non-increasingness condition in Definition \ref{commonLF} if we have $\nabla V(z) A_\ell^{(2)} z \le 0$ whenever $\nabla V(z)$ exists. Note that we can write the following \[\nabla V(z)=c_k^T~\mbox{for all}~z \in \left\{ z\left | c_k^T z = \max_{j\in\{1,..,L\} }c_j ^Tz\right .\right \}^\circ,\]
 	where ``$\circ$'' denotes interior. Therefore, we need the following condition to be satisfied  $\forall \ell=1,..,s, \forall k=1,..,L$
 	\begin{equation}\label{Vdot}c_k^T A_\ell^{(2)} z \le 0~\mbox{whenever}~c_k^T z = \max_{j\in  {\{} 1,..,L  {\}}} c_j^T z. \end{equation}
 	In other words, the time-derivative of the $k$th linear component $c_k^T z$ needs to be non-positive only when the $k$th linear component is \emph{active}. 
 	
 	Since we are looking for robust, i.e., kinetics-independent conditions, we need to to impose a geometric condition relating the vectors $c_1,..,c_L$ with the matrices $A_1^{(2)},...,A_s^{(2)}$.  This can be achieved by noting that the \eqref{Vdot} is automatically satisfied if $-c_k^T A_\ell^{(2)}$ lies in the conic span of $\{c_k^T-c_j^T | j=1,..,L, j\ne k\}$. By the Farkas Lemma \cite{rockafellar}, \eqref{Vdot} is satisfied if there exist scalars $\lambda_{j}^{(k\ell)}\ge 0, j=1,..,L$, with $ \sum_{j\ne\ell}\lambda_{j}^{(k\ell)} >0$ such that
 	\begin{equation} \label{conicspan} - c_k^T A_\ell^{(2)} = \sum_{j\ne k} \lambda_{j}^{(k\ell)} (c_k^T - c_j^T). \end{equation}
 	Hence verifying the non-increasingness of the RLF reduces to satisfying the condition \eqref{conicspan}. 
 	
 	The algorithm starts with an \emph{initial} matrix $C_0=[c_1,..,c_{L_0}]^T \in \mathbb R^{L_0\times {N  }} $, where $N:={ {n-c} \choose 2 }$, and we let $V_0(z)= \max_{k \in \{1,..,L_0\}} c_k^T z $. We  choose $C_0=\mbox{diag}\left[I_{N }, - I_{ N }\right]$ to guarantee positive-definiteness of $V$.
 	
 	For each $c_k$ (amongst the rows of $C_0$), and for each $\ell$, we need to verify that condition \eqref{conicspan} is satisfied.  If not, we compute a new row $c^*$ chosen as to satisfy $-c_k^T A_\ell^{(2)}= c_k-c^*$. Hence,
 	\begin{equation} \label{newvector} {c^*}^T:= c_k^T (A_{\ell}^{(2)}+I).\end{equation}
 	
 	The new vector is appended to the matrix $C_0$ to yield a new matrix $C_1:=[C_0^T,c^*]^T$. The same process is repeated for each row vector of the coefficient matrix until either no new vectors need to be added or that the number of iterations exceeds a predefined number.
 	
 	There can be many variations on the basic recipe above.  Hence, we  {state the following}:
 	\begin{theo}\label{th_algorithm}
 		Given a network $( {\mathcal{S},\mathcal{R}})$.   If  Algorithm \ref{algorithm}  terminates successfully, then  $V$  is a common Lyapunov for the LDI $\dot z \in \mbox{cone}\{A_1^{(2)},..,A_s^{(2)}\}$. 
 	\end{theo}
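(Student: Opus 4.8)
The plan is to verify directly the two requirements in Definition \ref{commonLF} for the cone $\mathcal{A}_0 = \textrm{cone}\{A_1^{(2)}, \ldots, A_s^{(2)}\}$: positive definiteness of $V$, and the non-increasingness inequality $\nabla V(z) A z \le 0$ for every $A \in \mathcal{A}_0$ at every point where $\nabla V$ exists. The bridge between the algorithm and these requirements is condition \eqref{conicspan}: I will argue that ``terminates successfully'' is, by construction, the statement that \eqref{conicspan} holds for every row index $k$ of the final coefficient matrix and every $\ell \in \{1,\ldots,s\}$, and that \eqref{conicspan} forces \eqref{Vdot}, which is precisely the per-piece non-increasingness we need.

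First I would dispose of positive definiteness. Since the algorithm initializes with $C_0 = \textrm{diag}[I_N, -I_N]$, the rows of the coefficient matrix always include $\pm e_i$ for every $i$, so $V(z) = \max_k c_k^T z \ge \max_i |z_i| = \|z\|_\infty$, which is strictly positive for $z \ne 0$. Because each iteration only appends rows, $V$ is a pointwise maximum over a growing index set and can only increase; hence positive definiteness is preserved up to termination.

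The core step is the implication \eqref{conicspan} $\Rightarrow$ \eqref{Vdot}. Fixing a pair $(k,\ell)$ for which \eqref{conicspan} holds and pairing both sides with an arbitrary $z$ gives
\[ -c_k^T A_\ell^{(2)} z = \sum_{j \ne k} \lambda_j^{(k\ell)} \left( c_k^T z - c_j^T z \right). \]
When the $k$-th component is active at $z$, i.e. $c_k^T z = \max_j c_j^T z$, each difference $c_k^T z - c_j^T z$ is nonnegative; since the multipliers $\lambda_j^{(k\ell)} \ge 0$, the right-hand side is nonnegative, yielding $c_k^T A_\ell^{(2)} z \le 0$, which is exactly \eqref{Vdot}. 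This is also where the particular update rule pays off: the appended vector ${c^*}^T = c_k^T (A_\ell^{(2)} + I)$ satisfies $-c_k^T A_\ell^{(2)} = c_k^T - {c^*}^T$, so once $c^*$ has been added the offending pair automatically obeys \eqref{conicspan} with a single unit weight on $c^*$, and successful termination certifies that no such repair is left outstanding for any row, including those produced in earlier iterations.

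Finally I would assemble these into the conclusion. At any $z$ where $\nabla V(z)$ exists, convexity and piecewise-linearity of $V$ force a unique maximizer $k$, and $\nabla V(z) = c_k^T$. For an arbitrary $A = \sum_\ell \alpha_\ell A_\ell^{(2)} \in \mathcal{A}_0$ with $\alpha_\ell \ge 0$,
\[ \nabla V(z) A z = \sum_{\ell=1}^s \alpha_\ell\, c_k^T A_\ell^{(2)} z \le 0, \]
each summand being nonpositive by \eqref{Vdot} for the active index $k$. Together with positive definiteness this matches Definition \ref{commonLF} verbatim, so $V$ is a common (non-strict) Lyapunov function for the LDI. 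The proof is essentially bookkeeping once the machinery is in place; the one point demanding care is the logical reading of ``terminates successfully''\,---\,it must mean that the no-new-vector branch is reached (all pairs satisfy \eqref{conicspan}) rather than the iteration cap, since hitting the cap gives no guarantee at all.
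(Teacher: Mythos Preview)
Your proposal is correct and follows essentially the same reasoning as the paper: the paper does not supply a separate formal proof of Theorem~\ref{th_algorithm}, but the justification is contained in the discussion immediately preceding it (the derivation of \eqref{Vdot}, the Farkas-type condition \eqref{conicspan}, the update rule \eqref{newvector}, and the remark that $C_0=\mathrm{diag}[I_N,-I_N]$ guarantees positive definiteness), and your write-up simply formalizes those pieces, including the correct reading of ``terminates successfully'' as the flag=1 branch.
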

 	\begin{algorithm}[h]
 		\SetAlgoLined

 		{\textbf{Parameters:}  $M$ as the upper maximum number of iterations. }\\
 		
 		{\textbf{Initialization:} Set $\mbox{flag}=0$, $C_0=\mbox{diag}\left[I_L, - I_L\right]$, %
 			$k := 1$, $L:={{n-c} \choose 2}$.} \\

 		\While{$k<M$ and $flag=0$}{

 			\For{$\ell \in \{1,..,s\}$}{
 				\If{$c_k^T A_\ell^{(2)} \ne 0$}{
 					$c^*:= c_k^T (A_\ell^{(2)}+I)$ ; 		\\
 					\If{ $c^* \ne c_\ell$ for $\ell=1,..,k$  }{set $C:=[C^T, c^{*T}]^T $;} 
 				}
 			}
 			$k:=k+1$; \\
 			
 			$L:=$ number of rows of $C$; \\
 			
 			\If{ $L<k$  }{set flag:=1;} 
 		}		
 		\eIf{$\mathrm{flag}=1$}
 		{ Success. $V(z)=\max_{k=0,..,m} c_k^T z$ is the desired function}
 		{	The algorithm did not converge within the prescribed upper maximum number of iterations. }

 		\caption{Iterative construction of PWL RLFs.} \label{algorithm}
 	\end{algorithm}

 	\subsection{Existence of PWL Lyapunov functions for LDIs}
 	Recall that the dynamics of a BIN can be embedded in an LDI of rank-one matrices \eqref{ldi_embedding}, and that the dynamics of  (\ref{2varcrn}) can be studied by the LDI of the corresponding second-additive compounds.
 	Our aim in this section is to provide alternative characterization for the existence of PWL Lyapunov functions for LDIs of rank-one stable matrices and their second compounds. We will     use their specific properties   (and their matrix exponentials) in order to simplify the test of property (\ref{commonlf}). Some of the results in this subsection  recover the discrete-time approach first introduced in \cite{blanchini} for studying the stability of BINs.

 	We start by stating  the following result.
 	\begin{lemma}\label{lem.exmrank1}
 		 {For a square rank one matrix $A= v w^T$ (and non zero vectors $v,w \in \mathbb{R}^N$ for any integer $N>0$) } the following expression holds:
 		\begin{equation}
 			\label{exmrank1}
 			e^{At} = I + v w^T  \int_0^t e^{ ( w^T v) \tau} \, d \tau.
 		\end{equation}
 	\end{lemma}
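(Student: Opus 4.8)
The plan is to exploit the rank-one structure of $A = vw^T$ to collapse the matrix exponential onto a scalar power series. First I would record the key algebraic identity that drives everything: writing $\lambda := w^T v$, we have
\[ A^2 = (vw^T)(vw^T) = v(w^Tv)w^T = \lambda\, vw^T = \lambda A, \]
and hence, by a one-line induction, $A^k = \lambda^{k-1} A$ for every integer $k \geq 1$.

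With this recursion in hand, I would substitute directly into the defining series of the matrix exponential and factor out the common matrix $A$:
\[ e^{At} = I + \sum_{k=1}^{\infty} \frac{t^k}{k!} A^k = I + \left( \sum_{k=1}^{\infty} \frac{\lambda^{k-1} t^k}{k!} \right) A. \]
The interchange here is harmless: it amounts only to the absolute convergence of the exponential series, which already underwrites the definition of $e^{At}$.

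The final step is to identify the scalar prefactor with the claimed integral. Expanding $e^{\lambda \tau}$ and integrating term by term gives
\[ \int_0^t e^{\lambda \tau}\, d\tau = \sum_{m=0}^{\infty} \frac{\lambda^m}{m!}\cdot \frac{t^{m+1}}{m+1} = \sum_{k=1}^{\infty} \frac{\lambda^{k-1} t^k}{k!}, \]
after reindexing $k = m+1$ and using $m!(m+1) = (k)!$, which is exactly the bracketed series above. Substituting back and recalling $A = vw^T$, $\lambda = w^T v$ then yields the asserted formula.

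There is essentially no substantive obstacle in this argument; the only point worth flagging is the degenerate case $\lambda = w^T v = 0$, in which $A$ is nilpotent ($A^2 = 0$). The derivation is uniform in $\lambda$: the integral collapses to $\int_0^t 1\, d\tau = t$, reproducing the correct truncated exponential $e^{At} = I + tA$, so the single closed-form expression covers both regimes without a separate case split.
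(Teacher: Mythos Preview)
Your argument is correct. The paper actually states this lemma without proof, so there is no approach to compare against; your power-series computation via the identity $A^k = (w^T v)^{k-1}A$ is the standard derivation and handles the degenerate case $w^T v = 0$ cleanly.
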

 	Notice that the exponential inside the integral is a scalar exponential. Hence, a non-trivial rank one linear system (with $A \neq 0$) is globally stable if and only if $w^{T} v < 0$, (in fact for $w^T v >0$ exponential instability arises, while for 
 	$w^T v=0$ the matrix exponential grows linearly in time).
 	Hence, without loss of generality we limit our discussion to rank one switched linear systems such that $w_\ell^T v_\ell <0$ for all $\ell \in \{1,..,L\}$.
 	
 	We show that the matrix exponential of a rank-one stable matrix can be written always as a convex combination of $I$ and the asymptotic value of the matrix exponential. The same also holds for the matrix exponential of its second additive compound. This is stated in the following Lemma:
 	\begin{lemma}\label{convexcomb}
 		Let $A = v w^T$ be a stable $n \times n$ rank one real matrix, for suitable vectors $v, w \in \mathbb{R}^n$. Denote by $A^{(2)}$ the associated second additive compound matrix. Then,
 		\begin{enumerate}
 			\item $e^{At} = e^{(w^T v) t} I + (1- e^{(w^T v) t}) \Pi$, where  $\Pi= \lim_{t \rightarrow + \infty}  e^{A t }$.
 			\item $ e^{A^{(2)} t } = e^{(w^T v) t }  I + (1-e^{ (w^T v) t}) \Pi_2$, 	where $ \Pi_2 = \lim_{t \rightarrow + \infty}  e^{A^{(2)} t }.$
 		\end{enumerate}
 	\end{lemma}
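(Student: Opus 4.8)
The plan is to handle the two items in turn, arranging matters so that the second reduces to exactly the same algebra as the first once the spectral structure of $A^{(2)}$ is understood. For item 1, I would start directly from Lemma \ref{lem.exmrank1}. Writing $\lambda := w^T v$ (so that $\lambda < 0$ by the standing stability assumption $w^T v<0$), the scalar integral evaluates to $\int_0^t e^{\lambda \tau}\,d\tau = (e^{\lambda t}-1)/\lambda$, and since $A = vw^T$, the formula of Lemma \ref{lem.exmrank1} becomes $e^{At} = I + \tfrac{e^{\lambda t}-1}{\lambda}\,A$. Letting $t \to +\infty$ and using $\lambda<0$ identifies the limit $\Pi = I - \tfrac{1}{\lambda}A$. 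Substituting this back and collecting terms gives $e^{\lambda t}I + (1 - e^{\lambda t})\Pi = I + \tfrac{e^{\lambda t}-1}{\lambda}A = e^{At}$, which is precisely the claimed identity. Note that $e^{\lambda t}\in(0,1]$ for $t\ge 0$, so the right-hand side is a genuine convex combination of $I$ and $\Pi$, justifying the name.

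For item 2, the key observation I would establish is the single algebraic identity $(A^{(2)})^2 = \lambda A^{(2)}$; once this is available, the matrix exponential of $A^{(2)}$ is computed by repeating the manipulation of item 1. Indeed, $(A^{(2)})^k = \lambda^{k-1}A^{(2)}$ for all $k\ge 1$ by induction, so that $e^{A^{(2)}t} = I + \tfrac{A^{(2)}}{\lambda}\sum_{k\ge 1}\tfrac{(\lambda t)^k}{k!} = I + \tfrac{e^{\lambda t}-1}{\lambda}A^{(2)}$, whence $\Pi_2 = I - \tfrac{1}{\lambda}A^{(2)}$ and the convex-combination form follows verbatim. To obtain $(A^{(2)})^2 = \lambda A^{(2)}$ I would argue through the spectrum. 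Since $A = vw^T$ is rank one with $\operatorname{tr}A = \lambda \ne 0$, it is diagonalizable: the eigenvalue $\lambda$ (eigenvector $v$) is simple and the eigenvalue $0$ has multiplicity $n-1$ with eigenspace $\ker w^T$. Interpreting $A^{(2)}$ as the derivation induced by $A$ on the exterior square, i.e. $A^{(2)}(u\wedge u') = (Au)\wedge u' + u\wedge(Au')$, a basis of eigenvectors $\{v_i\}$ of $A$ produces the basis $\{v_i\wedge v_j\}_{i<j}$ of eigenvectors of $A^{(2)}$ with eigenvalues $\mu_i+\mu_j$. For our spectrum these pairwise sums take only the values $\lambda$ (multiplicity $n-1$) and $0$ (multiplicity $\binom{n-1}{2}$). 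Thus $A^{(2)}$ is diagonalizable with spectrum contained in $\{0,\lambda\}$, so its minimal polynomial divides $s(s-\lambda)$, which is exactly $(A^{(2)})^2=\lambda A^{(2)}$.

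I expect the main obstacle to be the clean justification of the eigenstructure of $A^{(2)}$, namely that the additive compound preserves diagonalizability and acts on eigenvalues by pairwise sums; the derivation-on-$\Lambda^2$ picture above makes this transparent, but an equivalent and fully self-contained route is to invoke the multiplicative relation $e^{A^{(2)}t} = \big(e^{At}\big)^{[2]}$ between the additive-compound flow and the second multiplicative compound, and to compute the latter from the already-known rank-one form of $e^{At}$ established in item 1. Either way, the remaining steps are the routine power-series and limit computations indicated above, and the positivity $e^{\lambda t}\in(0,1]$ again guarantees that the resulting expression is a convex combination of $I$ and $\Pi_2$.
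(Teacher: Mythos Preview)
Your argument is correct. Item~1 coincides with the paper's proof essentially line for line. Item~2, however, follows a genuinely different route. The paper exploits the representation of $A^{(2)}$ as the operator $L(X)=AX+XA^T$ acting on skew-symmetric matrices, so that $e^{A^{(2)}t}$ corresponds to the solution map $X(0)\mapsto e^{At}X(0)e^{A^Tt}$; substituting the rank-one form of $e^{At}$ and using $w^T X(0) w=0$ for skew-symmetric $X(0)$ then kills the quadratic cross-term and delivers the convex-combination identity directly. Your approach instead establishes the minimal-polynomial relation $(A^{(2)})^2=\lambda A^{(2)}$ via the spectral/derivation picture on $\Lambda^2\mathbb{R}^n$, after which item~2 is literally the same power-series computation as item~1. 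Your route is arguably more uniform (one mechanism covers both items) and yields the explicit formula $\Pi_2=I-\lambda^{-1}A^{(2)}$; the paper's route avoids invoking diagonalizability or exterior algebra and is entirely self-contained once the Lyapunov-operator interpretation is accepted. Either the derivation argument or the multiplicative-compound alternative you mention closes the only nontrivial step.
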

 	\begin{proof} \strut  
 		\begin{enumerate}
 			\item Using Lemma \ref{lem.exmrank1}, we can write:
 			\[ e^{At} = I + \frac{ v w^T }{w^T v} ( e^{(w^Tv) t } - 1 ), \]
 			which can be rearranged into $e^{At} = e^{(w^T v) t} I + (1- e^{(w^T v) t}) \Pi$, where
 			\[ \Pi := \lim_{t \rightarrow + \infty } e^{At} = I - \frac{v w^T}{w^T v}. \]
 			\item  {Let $\mathbb{SK}_n$ denote the class of $n \times n$
 				real skew-symmetric matrices, viz. $\mathbb{SK}_n = \{ X \in \mathbb{R}^{n \times n}: X= - X^T \}$.
 				For any matrix $A \in \mathbb{R}^{n \times n}$, the linear operator $L$ defined as:
 				\[ L(X):= AX + X A^T \]
 				is an endomorphism in $\mathbb{SK}_n$, viz. $L:\mathbb{SK}_n \rightarrow \mathbb{SK}_n$. Moreover, the second additive compound matrix $A^{(2)}$ can be interpreted as a  representation of $L$, with respect to the canonical basis $\mathbb{B}_n:=\{ e_i e_j^T - e_j e_i^T,  i < j  \}$ of $\mathbb{SK}_n$, where $i$ and $j$ take values in $\{1,2, \ldots , n\}$, $e_i$ denotes the $i$-th element of the canonical basis of $\mathbb{R}^n$,
 				and elements of $\mathbb{B}_n$ are listed according to lexicographic ordering of the underlying index pairs
 				$\{i < j \}$,
 				see \cite{pantea14}.}
 			Hence, the matrix exponential $e^{A^{(2)} t }$ can equivalently be computed by looking at the operator induced by the solution of the linear matrix differential equation:
 			\[ \dot{X} = L(X).\]
 			This is well-known to be $X(t) = e^{A t} X(0) e^{A^Tt}$ which in the case of  {$A$ being of rank one} (assuming without loss of generality $w^Tv=-1$  {)} :
 			\begin{align*}&X(t)  =  (I - (e^{-t} -1)  v w^T ) X(0)  (I - (e^{-t} -1)  v w^T )^T \\
 				&  \!\!= X(0) - (e^{-t} - 1) v w^{T}  X(0) - (e^{-t}-1) X(0) w v^T \\ &   \qquad +
 				(e^{-t}-1)^2 v \underbrace{w^T X(0) w}_{=0} v^T      \\
 				& \!\!= e^{-t} X(0) + (1- e^{-t}) [ X(0) + v w^T X(0) + X(0) w v^T].   \end{align*}
 			Hence the result follows by noticing that
 			\[ \lim_{t \rightarrow + \infty }  X(t) = [ X(0) + v w^T X(0) + X(0) w v^T]. \]	
 			and letting $\Pi_2$ be the matrix associated to the operator
 			$L_{\infty} (X) := [ X + v w^T X + X w v^T]$ acting on real skew-symmetric matrices of dimension $n$.
 		\end{enumerate}
 	\vspace{-0.2in}
 	\end{proof}
 	This allows to recast condition (\ref{commonlf}) in a simpler way that does not directly involves time.
 	\begin{lemma}
 		\label{dtcheck}
 		Let the matrices $A_1,..,A_s \in \mathbb R^{N \times N}$, and a convex locally Lipschitz function $V:\mathbb R^N \to \mathbb R_{\ge 0}$ be given. Assume that $A_{\ell}, \ell=1,..,s$ are either stable rank-one matrices or their second additive compounds.  Then,	$V$ is a common Lyapunov function for the LDI $\dot z(t) \in \mbox{cone}\{A_1,..,A_s\}$ iff $V$ is positive definite and
 		\begin{equation}
 			\label{discretetimeV}
 			V( \Pi_\ell z ) \leq V(z), \qquad \forall \, z,  \forall \, \ell \in \{1,..,s\},
 		\end{equation}
 		where $\Pi_\ell:= \lim_{t\to \infty} e^{A_\ell t}$.
 	\end{lemma}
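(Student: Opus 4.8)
The plan is to chain together the two characterizations already available in the excerpt—the infinitesimal (Dini-derivative) test of Definition~\ref{commonLF} and the trajectory test of Lemma~\ref{lem2}—with the explicit convex-combination formula for $e^{A_\ell t}$ furnished by Lemma~\ref{convexcomb}. Positive definiteness appears identically on both sides of the claimed equivalence, so no work is needed there; the content is the passage between ``non-increase along the flow'' and the time-free inequality \eqref{discretetimeV}. The one genuinely delicate point, flagged below, is that the convenient structure of the matrix exponential is available only for a \emph{single} generator, never for an arbitrary element of $\mathrm{cone}\{A_1,\dots,A_s\}$; the reduction to generators must therefore be carried out at the infinitesimal level, where the relevant condition is linear in the matrix.

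First I would reduce the cone condition to its generators. By Definition~\ref{commonLF} together with Lemma~\ref{lem.lip}, $V$ is a common Lyapunov function for $\dot z\in\mathrm{cone}\{A_1,\dots,A_s\}z$ iff $V$ is positive definite and $\nabla V(z)\,A\,z\le 0$ for every $A\in\mathrm{cone}\{A_1,\dots,A_s\}$ at every point where $\nabla V(z)$ exists. Since any such $A$ equals $\sum_{\ell}\alpha_\ell A_\ell$ with $\alpha_\ell\ge 0$ and the map $A\mapsto\nabla V(z)\,A\,z$ is linear, one has $\nabla V(z)\,A\,z=\sum_\ell\alpha_\ell\,\nabla V(z)\,A_\ell\,z$; hence the cone condition holds iff $\nabla V(z)\,A_\ell\,z\le 0$ for every $\ell$ and almost every $z$. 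Applying Lemma~\ref{lem2} (with $\varepsilon=0$) in the reverse direction to each single-generator inclusion $\dot z\in\{\alpha A_\ell:\alpha\ge 0\}z$, this is in turn equivalent to
\[
 V\!\left(e^{A_\ell t}z\right)\le V(z),\qquad \forall\,z,\ \forall\,t\ge 0,\ \forall\,\ell .
\]

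It then remains to show, for each fixed $\ell$, that this per-generator trajectory condition is equivalent to $V(\Pi_\ell z)\le V(z)$ for all $z$. Here I would invoke Lemma~\ref{convexcomb}, which applies verbatim both to a stable rank-one $A_\ell$ and to the second additive compound of one: writing $\mu_\ell:=w_\ell^T v_\ell<0$, for every $t\ge 0$
\[
 e^{A_\ell t}z=\theta\,z+(1-\theta)\,\Pi_\ell z,\qquad \theta:=e^{\mu_\ell t}\in(0,1].
\]
For ``$\Leftarrow$'', convexity of $V$ gives $V(e^{A_\ell t}z)\le \theta V(z)+(1-\theta)V(\Pi_\ell z)\le V(z)$ as soon as $V(\Pi_\ell z)\le V(z)$. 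For ``$\Rightarrow$'', letting $t\to+\infty$ forces $\theta\to 0$, so $e^{A_\ell t}z\to\Pi_\ell z$, and continuity of $V$ yields $V(\Pi_\ell z)=\lim_{t\to+\infty}V(e^{A_\ell t}z)\le V(z)$. Combining this equivalence over all $\ell$ with the reduction of the previous paragraph proves the Lemma.

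I expect the main obstacle to be conceptual rather than computational: one must resist applying the convex-combination formula directly to $e^{At}$ for a general $A$ in the cone, for which $e^{At}$ is \emph{not} a convex combination of $I$ and a projection, and instead pass through the linear infinitesimal condition to localize the argument to individual generators, where Lemma~\ref{convexcomb} turns the flow into a genuine convex combination and convexity plus continuity of $V$ close the equivalence.
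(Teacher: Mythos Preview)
Your proof is correct and follows essentially the same route as the paper: invoke Lemma~\ref{convexcomb} to write $e^{A_\ell t}$ as a convex combination of $I$ and $\Pi_\ell$, use convexity of $V$ for the ``$\Leftarrow$'' direction and continuity of $V$ plus $t\to\infty$ for ``$\Rightarrow$''. The only difference is that you make explicit the reduction from the full cone to the generators via linearity of the infinitesimal condition, whereas the paper works per-$\ell$ from the start and tacitly relies on Lemma~\ref{lem2} to close the gap; your extra paragraph is a welcome clarification rather than a different argument.
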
 
 	\begin{proof}
 		Fix $\ell$.	Using Lemma \ref{convexcomb}, for any $t \geq 0$ let $\alpha \in [0,1]$ be such that $e^{A_\ell t} = \alpha I + (1- \alpha) \Pi$.
 		Assume that (\ref{discretetimeV}) holds. Then
 		\[ V( e^{A_\ell t} z )= V ( \alpha z+ (1-\alpha) \Pi_\ell  z) \leq
 		\alpha V(z) + (1-\alpha) V ( \Pi_\ell  z) \]
 		\[ \qquad \leq \alpha V(z) + (1- \alpha) V(z) = V(z). \]
 		Hence, condition (\ref{commonlf}) follows.
 		Conversely, let condition (\ref{commonlf}) hold. By letting $t$ go to infinity in both sides of the inequality and exploiting continuity of $V(x)$ we get:
 		\[ V( \Pi_\ell  z) = V \left ( \lim_{t \rightarrow + \infty} e^{A_\ell t} z \right )=\lim_{t \rightarrow + \infty} V (e^{A_\ell t}z ) \leq V(z). \]
 	\end{proof}
 	Lemma \ref{dtcheck} shows that common Lyapunov functions for continuous time rank-one linear systems (or their second-additive compounds) can in fact be tested by using the conditions typical of discrete time LDIs, in particular adopting in place of each matrix exponential $e^{A_\ell t}$ the corresponding projection matrix $\Pi_\ell $.     \label{s.discrete}
 	This has some advantages, in particular  as we may show the instability of a given LDI as we will demonstrate in the examples section. Further, we may consider a closed-form expression for $V(z)$ of the following form:
 	\begin{equation}
 		\label{dtcommonlyap}
 		V (z) := \sup_{L \in \mathbb{N}, w \in \{1,..,s\}^L}  \left | \left ( \prod_{k=1}^L \Pi_{w_k} \right ) z  \right |_{1, \infty}  
 	\end{equation}
 	where, for simplicity, either $1$ or $\infty$ norms (both piecewise linear) are adopted. 
 	 {For any initial condition $z$, the expression in (\ref{dtcommonlyap}) amounts to computation of the maximum $1$ or $\infty$ norm of all possible forward solutions of the discrete differential inclusion induced by $\Pi_{\ell}$, for $\ell=1,2,\ldots,s$.
 		For this reason,} $V(z)$ as defined above is well-posed (bounded) if and only if the corresponding LDI is stable. 
 	Notice that the supremum in equation (\ref{dtcommonlyap})  is taken over an infinite number of possible product combinations. In practice, it is often the case that only a finite number of such products actively contribute to the value of $V(z)$ over $\mathbb{R}^n$ and, as a consequence, a finitely verifiable construction algorithm for polytopic Lyapunov functions can be derived by using the above formula whenever it is realized that only words
 	of up to a fixed length actively contribute to the value of $V(z)$. 
 	
 	It can be noted that this alternative algorithm is computationally slower than Algorithm \ref{algorithm}, and it has yielded the same results that we got using Algorithm \ref{algorithm}. On the other hand, the second algorithm can be terminated quickly if the spectral radius of one of the products  in \eqref{dtcommonlyap} exceeds $1$ since this means that the corresponding LDI is exponentially unstable.
 	
 	 {
 		\begin{remark}
 			Alternative methods can be proposed for deriving the Lyapunov functions. This includes studying the corresponding LDI in reaction coordinates \cite{MA_cdc14,plos}, or via  the concept of duality. In particular, one may consider the LDI associated to  $(A_i^{(2)})^T = (A_i^T)^{(2)}$. Such LDI enjoys the same stability properties of the original one and any Lyapunov function for the latter can be transformed to a Lyapunov function for the first one using well-known techniques, see for instance \cite{blanchini}. 
 		\end{remark}
 	}

 	\section{Biochemical examples} 
 	\subsection{A PTM cycle regulated by the binding of a receptor and a ligand}
 	We continue studying the regulated PTM \eqref{ptm_open} which was first introduced in \cite{dissertation}. Its Petri-net is depicted in Figure \ref{f.ptm}-a). The ODE describing the network is given in \eqref{ptmRL.ode}.
 	This network is known to fulfill all necessary conditions for existence of a PWL RLF (either in species or rates coordinates) but whose global asymptotic stability is still an open problem \cite{plos}.

 	The reduced Jacobian \eqref{ldi_embedding} (defined via the transformation matrix \eqref{e.T}) is a linear (positive) combination of the following rank one matrices, 
 	\[\scriptsize A_1\!=\!\!\left[\begin{array}{rrr} -1 & 0 & 0\\ 1 & 0 & 0\\ 0 & 0 & 0 \end{array}\right]\!,
 	A_2\!=\!\!\left[\begin{array}{rrr} -1 & 0 & 0\\ 1 & 0 & 0\\ 0 & 0 & 0 \end{array}\right]\!,
 	A_3\!=\!\left[\begin{array}{rrr} 0 & 1 & 0\\ 0 & -1 & 0\\ 0 & 0 & 0 \end{array}\right]\!, \]
 	\[ \scriptsize A_4\!\!=\!\!\left[\begin{array}{rrr} 0 & 0 & 0\\ 0 & -1 & 0\\ 0 & 0 & 0 \end{array}\right]\!\!,
 	A_5\!\!=\!\!\left[\begin{array}{rrr} 0 & 0 & 0\\ -1 & -1 & 1\\ 0 & 0 & 0 \end{array}\right]\!\!,
 	A_6\!\!=\!\left[\begin{array}{rrr} 0 & 0 & 0\\ -1 & -1 & 0\\ 0 & 0 & 0 \end{array}\right]\!\!, \] \[\scriptsize
 	A_7=\left[\begin{array}{rrr} 0 & 0 & 0\\ -1 & -1 & 0\\ -1 & -1 & 0 \end{array}\right],
 	A_8=\left[\begin{array}{rrr} 0 & 0 & 0\\ 0 & 0 & 0\\ 0 & 0 & -1 \end{array}\right].\]
 	Notice that the LDI:
 	\[ \dot{x} (t) \in \{A_1,...,A_8\} x(t), \]
 	is not Lyapunov stable as there exists a combination of matrices exhibiting linear instability. In particular,
 	\[ e^{(A_4+A_7) t } = \left[\begin{array}{ccc} 1 & 0 & 0\\ \frac{{\mathrm{e}}^{-2t}}{2}-\frac{1}{2} & {\mathrm{e}}^{-2t} & 0\\ \frac{{\mathrm{e}}^{-2t}}{4}-\frac{t}{2}-\frac{1}{4} & \frac{{\mathrm{e}}^{-2t}}{2}-\frac{1}{2} & 1 \end{array}\right].
 	\]
 	For this reason we introduce the corresponding second additive compound matrices listed below:
 	\[\scriptsize A_1^{(2)}\!\!=\!\!\begin{bmatrix} -1 & 0 & 0\\ 0 & -1 & 0\\ 0 & 1 & 0 \end{bmatrix}\!\! ,
 	A_2^{(2)}\!\!=\!\!  \begin{bmatrix} -1 & 0 & 0\\ 0 & -1 & 0\\ 0 & 1 & 0 \end{bmatrix}\!\! ,
 	A_3^{(2)}\!\!=\!\!  \begin{bmatrix} -1 & 0 & 0\\ 0 & 0 & 1\\ 0 & 0 & -1 \end{bmatrix}\!\! ,\] \[ 
 	\scriptsize A_4^{(2)}\!\!=\!\!    \begin{bmatrix} -1 & 0 & 0\\ 0 & 0 & 0\\ 0 & 0 & -1 \end{bmatrix}\!\!, 
 	A_5^{(2)}\!\!=\!\!    \begin{bmatrix} -1 & 1 & 0\\ 0 & 0 & 0\\ 0 & -1 & -1 \end{bmatrix}\!\!,
 	A_6^{(2)}\!\!=\!\!    \begin{bmatrix} -1 & 0 & 0\\ 0 & 0 & 0\\ 0 & -1 & -1 \end{bmatrix}\!\!,\]\[ \scriptsize
 	A_7^{(2)}\!\!=\!\!    \begin{bmatrix} -1 & 0 & 0\\ -1 & 0 & 0\\ 1 & -1 & -1 \end{bmatrix}\!\!,
 	A_8^{(2)}\!\!= \!\!   \begin{bmatrix} 0 & 0 & 0\\ 0 & -1 & 0\\ 0 & 0 & -1 \end{bmatrix}\!\!,\]
 	and, rather than assessing global asymptotic stability we look at the slightly weaker notion of globally non-oscillatory behavior.
 	Hence, we study stability of the differential inclusion:
 	\begin{equation}
 		\label{secondswitch}
 		\dot{\delta^{(2)}}(t) \in \textrm{cone} \{ A^{(2)}_1, A^{(2)}_2, \ldots,
 		A^{(2)}_8  \} \delta^{(2)} (t). 
 	\end{equation} 
 	where $\delta^{(2)} (t)$ is a vector of dimension ${n \choose 2}$.
 	Application of Algorithm 1 results in the following suitable Lyapunov function for
 	system (\ref{secondswitch}):
 	\begin{equation} V( \delta^{(2)} ) = \max \{   |\delta^{(2)}_1|,|\delta^{(2)}_2|, |\delta^{(2)}_3|,| \delta^{(2)}_2+ \delta^{(2)}_3|, | \delta^{(2)}_2 - \delta^{(2)}_1 | \}.   \label{V_example}\end{equation}
 	Also, the  formula (\ref{dtcommonlyap}) (by adopting the $\infty$-norm) results in the same function.
 	
 	Moreover, modelling the network as a Petri Net (see Fig. \ref{f.ptm}-a))
 	one can show that it admits $3$ minimal siphons, $\{ R,K,C \}$, $\{ L,K,C\}$ and $\{ S,C,P \}$. These are trivial siphons, as they coincide with the support of a non-negative conservation law. 
 	Moreover conditions (\ref{lasass}) are fulfilled. Hence, the BIN is non-oscillatory by virtue of Theorems \ref{expcrn} and \ref{muld}, regardless of the specific choice of kinetics. 
 	
 	Additional analysis of the network is possible, The Jacobian is a $P_0$ matrix for any choice of kinetics, hence the network can not admit multiple non-degenerate steady states in a single stoichiometric class \cite{craciun05,banaji07}. In addition, it can be shown that the Jacobian is robustly non-degenerate in the interior of the orthant \cite{colaneri}, \cite{plos}. Furthermore, the boundary of any non-trivial stoichiometric class cannot contain any steady states due to the absence of critical siphons \cite{persistence}, hence no more than one steady state can exist in the interior of each stoichiometric class.  More recently, sum-of-square optimization has been used to show that the reduced Jacobian is Hurwitz at any steady state, i.e., each steady state is locally asymptotically stable relative to its stoichiometric class \cite{colaneri}.   The existence of at least one steady state follows by the Brouwer's fixed point theorem \cite{royden88} or Poincar\'{e}-Hopf theorem \cite{krasno84}. 
 	To summarize, each non-trivial stoichiometric class contains a unique locally asymptotically stable steady state and the network is robustly non-oscillatory.  Though global asymptotic stability is still technically open, this is a quite tight approximation.
 	
 	\begin{figure*}
 		\centering
 		\includegraphics[width=0.8\textwidth]{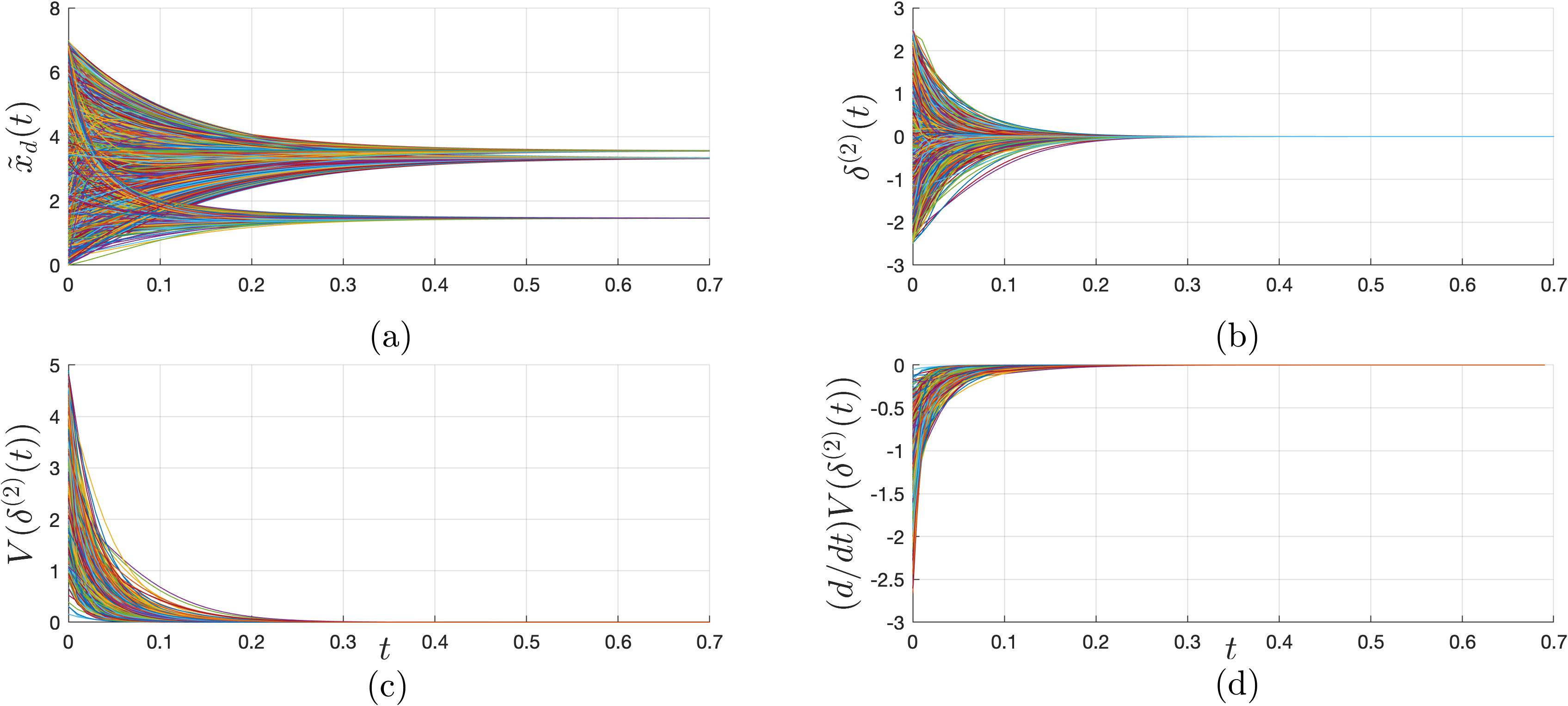}
 		\caption{ {\bfseries Sample trajectories of the regulated PTM with Mass-Action kinetics.}   {(a) Trajectories of \eqref{red_example} with 500 randomly selected initial conditions. Note that all trajectories converge to the unique steady state. (b) The corresponding trajectories of $\delta^{(2)}(t)$ (as defined in \eqref{2varcrn}) with   randomly chosen initial conditions $\delta^{(2)}(0)$. (c) The PWL Lyapunov function $V$ \eqref{V_example} evaluated over the trajectories of $\delta^{(2)}$ is decreasing as foretold by our results. (d) The time-derivatives of $V$ evaluated via MATLAB's command \texttt{diff} is negative for all $t\ge 0$. The chosen reaction rate vector (as in \eqref{ptmRL.ode}) is $R(x)=[5x_1 x_2,3x_3,5x_3x_4,x_5,2x_5,6x_6]$. The conserved quantities are $x_{1,tot}=x_{2,tot}=x_{4,tot}=15$. }} \label{simulation}
 	\end{figure*}
 	 {Figure \ref{simulation} shows sample trajectories of the system with Mass-Action kinetics and the corresponding PWL Lyapunov function \eqref{V_example} evaluated over the trajectories of $\delta^{(2)}$.}
 	
 	\subsection{A PTM cycle regulated by a kinase inhibitor}
 	In this subsection we discuss the network depicted in Figure \ref{f.ptm}-b). This network is interesting as we will show that the corresponding LDI is exponentially unstable.

 	The reactions are listed below:
 	\begin{equation}\label{ptm_I}
 		I+K   \xrightleftharpoons[\R_2]{\R_1} KI, \ 
 		S+K \xrightleftharpoons[\R_4]{\R_3} C \lra^{\R_5} P+K, \ P \lra^{\R_6} S. 
 	\end{equation}
 	The concentrations $x_1,..,x_6$ correspond to the species $I,KI,K,S,C,P$, respectively.

 	This network exhibits three conservation laws, $x_1 + x_2=\mbox{const}$, $x_2+x_3+x_5=\mbox{const}$ and $x_4 + x_5 + x_6=\mbox{const}$. Hence, each stoichiometry class is $3$-dimensional.
 	Choosing $x_2, x_5$ and $x_6$ as independent coordinates we achieve a reduced
 	Jacobian matrix of the following form:
 	\begin{align*} %
 		 \scriptsize  J_r= \left[\begin{array}{ccc} -\rho_{3,3}-\rho_{3,4}-\rho_{4,5}-\rho_{5,5} & -\rho_{3,3} & -\rho_{3,4}\\ -\rho_{1,3} & -\rho_{1,1}-\rho_{1,3}-\rho_{2,2} & 0\\ \rho_{5,5} & 0 & -\rho_{6,6} \end{array}\right],
 	\end{align*}
 	where $\rho_{j,i}:=\frac{\partial R_{j}}{\partial x_{i}},(j,i) \in \mathcal P$ are treated as arbitrary time-varying positive coefficients.
 	
 	The associated LDI, however, does not admit a common Lyapunov function. Indeed, by constructing products of the resulting $\Pi_\ell$ matrices (defined in \S \ref{s.discrete}), there exist finite products (of length $5$ or higher) with spectral radius strictly bigger than $1$.  
 	A Lyapunov function can instead be found for the embedding to the LDI of second additive compound matrices.
 	In particular,
 	\[ V( \delta^{(2)} ) = \max \{ | \delta_1^{(2)} |, | \delta_2^{(2)} |, | \delta_3^{(2)}|, | \delta_1^{(2)} - \delta_3^{(2)} |   \}. 
 	\]
 	is a suitable Lyapunov function. In addition, the Petri Net admits $3$ minimal siphons, $\{I,EI \}$, $\{ EI, K,C \}$, $\{S,C,P\}$, which are trivial. Again the main results of the paper can be applied to conclude that this is a robustly non-oscillatory dynamical system within each compact set included in the (strictly) positive orthant. Furthermore, similar to the previous example, it can be shown that each nontrivial stoichiometric class contains a unique positive steady state.

 	Similarly, the network in Figure \ref{f.ptm}-c) can be shown to be robustly non-oscillatory using Algorithm \ref{algorithm}.
 	
 	\section{Discussion}
 	We have proposed the notion of non-oscillation to be  studied as a useful verifiable property of nonlinear systems. A Lyapunov criteria has been proposed for robust non-oscillation. We have applied our theory to the study of BINs with general kinetics, and demonstrated the power of the theory for the study of regulated enzymatic cycles.

 	The failure of the existence a PWL RLF for the LDI associated to a BIN has no bearing on the actual properties of the BIN. While such conditions (existence of Lyapunov functions) are essentially necessary and sufficient for the study of stability in LDIs, they might  be conservative for the study of BINs.
 	These, in fact, are uncertain nonlinear systems merely embedded within an LDI but do not necessarily share all the dynamical behaviors of the LDI.
 	For instance, many BINs naturally have bounded solutions due to invariance of the positive orthant and existence of conservation laws, but this does not imply the resulting LDI will necessarily fulfil similar boundedness properties (invariance of the positive orthant is often not preserved in the embedding process).
 	
 	Although we have demonstrated the theory for systems which have unique steady states, the results are applicable to multistable systems, and finding a robustly non-oscillatory multi-stable BIN will be a highly interesting endeavour.
 	
 	 {To be concrete, and because of our interest in periodic or quasiperiodic behavior, we have restricted attention to parametrizations of invariant sets by tori, including circles.  However, the same method can be used to rule out invariant sets of positive measure that are parametrized by more general compact manifolds.}
 
 	\appendix
 	\subsection{Time-derivative of a locally Lipschitz Lyapunov function}
 	We include the following lemma and its proof. A similar lemma has been proven in \cite[Supplementary Information]{plos}.
 	\begin{lemma} \label{lem.lip}Let the matrices $A_1,..,A_s \in \mathbb R^{N \times N}$, a non-negative scalar $\varepsilon\ge 0$, and a locally Lipschitz function $V:\mathbb R^N \to \mathbb R_{\ge 0}$ be given. Let $\mathcal A_\varepsilon$ be as defined in \eqref{A_e}, and let $\dot z(t) \in \mathcal A_\varepsilon z(t)  $ be the corresponding LDI. Then for any trajectory $\varphi(t;z_0)$ of the LDI, we have:  $\frac{d}{dt} V(\varphi(t;z_0) )\le 0$ for all  $t \ge 0$, iff      $\nabla V(z) A_\ell z \le 0$ for all $z$ such that $\nabla V(z)$ exists and for all $\ell=1,..,s$.
 	\end{lemma}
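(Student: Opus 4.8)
The plan is to read the stated time-derivative $\frac{d}{dt}V(\varphi(t;z_0))$ as the upper Dini derivative $D^+V$ and to bridge the pointwise hypothesis on $\nabla V$ with the behaviour along trajectories through the Clarke generalized gradient $\partial_C V$ together with a nonsmooth chain rule. First I would record the standing facts. Since $\mathcal{A}_\varepsilon$ is compact and the right-hand side of the LDI is linear in $z$, every trajectory $\varphi(\cdot\,;z_0)$ is locally absolutely continuous and satisfies $\dot\varphi(t)=\sum_{\ell=1}^s\alpha_\ell(t)A_\ell\varphi(t)$ for measurable selections $\alpha_\ell(t)\ge\varepsilon$ and almost every $t$; and since $V$ is locally Lipschitz it is differentiable almost everywhere (Rademacher), while $v(t):=V(\varphi(t;z_0))$ is itself locally Lipschitz, hence a.e. differentiable and absolutely continuous. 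The analytic tool I would invoke is the standard nonsmooth chain rule: for almost every $t$ the derivative $\dot v(t)$ exists and lies in the compact interval $\{\langle\zeta,\dot\varphi(t)\rangle:\zeta\in\partial_C V(\varphi(t))\}$.

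For sufficiency, I would first propagate the pointwise hypothesis to the generalized gradient. Assuming $\nabla V(z)A_\ell z\le 0$ at every differentiability point $z$, take any $z_k\to z$ with $\nabla V(z_k)\to\zeta_0$; then $\langle\nabla V(z_k),A_\ell z_k\rangle\le 0$, and passing to the limit (continuity, together with $A_\ell z_k\to A_\ell z$) gives $\langle\zeta_0,A_\ell z\rangle\le 0$. Taking convex hulls yields $\langle\zeta,A_\ell z\rangle\le 0$ for every $\zeta\in\partial_C V(z)$ and every $\ell$. Consequently, along any trajectory and for a.e. $t$ one has $\langle\zeta,\dot\varphi(t)\rangle=\sum_\ell\alpha_\ell(t)\langle\zeta,A_\ell\varphi(t)\rangle\le 0$ for all $\zeta\in\partial_C V(\varphi(t))$, since each $\alpha_\ell(t)\ge\varepsilon\ge 0$. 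Hence the upper end of the interval in the chain rule is nonpositive, forcing $\dot v(t)\le 0$ a.e. As $v$ is absolutely continuous with a.e. nonpositive derivative it is non-increasing, so $D^+v(t)\le 0$ for every $t$.

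For necessity, I would probe each differentiability point separately. Fix a point $z_0$ at which $V$ is differentiable and an index $\ell$, and use the constant control $\alpha_\ell=\tau$, $\alpha_j=\varepsilon$ for $j\ne\ell$. The resulting constant matrix $A:=\tau A_\ell+\varepsilon\sum_{j\ne\ell}A_j$ lies in $\mathcal{A}_\varepsilon$, so $\varphi(t)=e^{At}z_0$ is an admissible trajectory with right velocity $\dot\varphi(0^+)=Az_0$. Because $V$ is differentiable at $z_0$, the classical one-sided chain rule gives $D^+v(0)=\nabla V(z_0)Az_0=\tau\,\nabla V(z_0)A_\ell z_0+\varepsilon\sum_{j\ne\ell}\nabla V(z_0)A_j z_0$, which is $\le 0$ by hypothesis. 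Dividing by $\tau$ and letting $\tau\to+\infty$ isolates $\nabla V(z_0)A_\ell z_0\le 0$. Since $z_0$ and $\ell$ were arbitrary, this is exactly the claimed pointwise condition.

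The delicate step is the nonsmooth chain rule used in the sufficiency direction, because $\varphi(t)$ may dwell on the measure-zero nondifferentiability set (the creases) of $V$ for a set of times of positive measure, so that $\dot v(t)$ cannot simply be read off as $\nabla V(\varphi(t))\dot\varphi(t)$. The remedy is the generalized-gradient version (as in the supplementary material of \cite{plos} and standard nonsmooth analysis): it suffices to know that $\dot v(t)\in\{\langle\zeta,\dot\varphi(t)\rangle:\zeta\in\partial_C V(\varphi(t))\}$ and that the upper end of this interval is nonpositive, which is precisely what the propagated condition guarantees. I would also note that in the application $V$ is the convex piecewise-linear function $\max_k c_k^T z$, for which $\partial_C V(z)$ is the convex hull of the active $c_k$, so the whole argument becomes elementary and the condition $\nabla V(z)A_\ell z\le 0$ reduces to $c_k^T A_\ell z\le 0$ for each active index.
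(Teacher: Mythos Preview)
Your proof is correct and follows essentially the same route as the paper: both directions go through the Clarke generalized gradient, propagating the pointwise inequality $\nabla V(z)A_\ell z\le 0$ to all $\zeta\in\partial_C V(z)$ by limits and convex hulls for sufficiency, and isolating a single $A_\ell$ by sending the corresponding coefficient to infinity for necessity (the paper phrases the latter as a contradiction, you as a direct limit, but the content is identical). One small correction: $\mathcal{A}_\varepsilon$ as defined in \eqref{A_e} is \emph{not} compact (the $\alpha_\ell$ are unbounded above), but this is immaterial since solutions of the LDI are absolutely continuous by definition and the measurable selection $\alpha_\ell(t)\ge\varepsilon$ is part of what it means to be a trajectory.
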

 	\begin{proof} 
 		Fix $t$. Let $z:=\varphi(t;z_0)$ be a trajectory of the LDI, and let $\dot z:=\frac{d}{dt}\varphi(t;z_0) \in \mathcal A_\varepsilon z $.
 		We can write:
 		\begin{align}\nonumber\frac{d}{dt} V(z(t)) &= \limsup_{h \to 0^+} \frac { V(\varphi(t+h;z_0)) - V(\varphi(t;z_0))} h \\ \nonumber &= \limsup_{h \to 0^+} \frac{ V(\varphi(t;z_0)+h \tfrac{d}{dt}\varphi(t;z_0)) - V(\varphi(t;z_0))} h \\ & = \limsup_{h \to 0^+} \frac{ V(z+h \dot z) - V(z)} h. \label{Vdot2} \end{align}
 		For sufficiency, we just need to prove the following statement:  assume that $\nabla V(z) A_\ell z \le 0$ whenever $\nabla V(z)$ exists and for all $\ell=1,..,s$, then $ D_{\dot z} V(z) := \limsup_{h \to 0^+} ( V(z+h \dot z) - V(z))/ h \le 0$, for all $z \in \mathbb R^n$ and all $\dot z \in \mathcal A_\varepsilon z$.

 		Since $V$ is assumed to be locally Lipschitz, Rademacher's Theorem implies that it is differentiable (i.e., gradient $\nabla V(z)$ exists) almost everywhere \cite{clarke}. Recall that for a locally Lipschitz function the \emph{Clarke gradient} at $z$ is defined as  $\bar\partial V(z) :=\mbox{co} \partial V(z)$, where:
 		$\partial V(z):=  \{ p \in \mathbb R^n : \exists z_i \to z \, \mbox{with} \, \nabla V (z_i)$~exists, such that,~$ p^T=\lim_{i \to \infty} {\nabla V(z_i)}  \}. $
 		
 		Let $p \in \partial V(z)$ and $\dot z \in A_\varepsilon z$. Let $\{z_i\}_{i=1}^\infty$  be any sequence as in the definition of the Clarke gradient such that $\nabla V(z_i) \to p^T$. Furthermore, by the assumption stated in the Lemma, we have $\nabla V(z_i) A_\ell z_i \le 0$ for all $\ell$ and $i$. Since $\dot z =\sum_{\ell} \rho_\ell A_\ell z$ for some $\rho_1,..,\rho_s\ge \varepsilon$, then we can define  corresponding sequences $\{\rho_{1i}\}_{i=1}^\infty,..,\{\rho_{si}\}_{i=1}^\infty \subset [\varepsilon, \infty)$  such that $ \dot z_i:=\sum_\ell \rho_{\ell i} A_\ell z_i \to \dot z$. Hence, $\nabla V(z_i) \dot z_i \le 0$, $i\ge 1$. The definition of $p$ implies that $p^T \dot z \le 0$. Since $p$ is arbitrary, the inequality holds for all $p \in \partial V(z)$. \\
 		Now, let $p \in \bar \partial V(z)$ where $ p=\sum_{i} \lambda_i p_i$ is a convex combination of any $p_1,...,p_{n+1} \in \partial V(z)$. By the inequality above, $p^T \dot z = \sum_{i} \lambda_i (p_i^T \dot z ) \le 0. $ Hence, $p^T \dot z \le 0$ for all $p \in \bar \partial V(z)$. \\
 		As in \cite{clarke}, the Clarke derivative of $V$ at $z$ in the direction of $\dot z$ can be written as $D_{\dot z}^C V(z) = \max \{ p^T \dot z : p \in \bar \partial V(z) \} $. By the above inequality, we get $D_{\dot z}^C V(z)  \le 0 $ for all $z$ and all $\dot z \in A_\varepsilon z$. Since the Dini derivative is upper bounded by the Clarke derivative \cite{clarke}, we finally get: $D_{\dot z}  V(z) \le D_{\dot z}^C V(z) \le 0 $
 		for all $z$ and all $\dot z \in A_\varepsilon z$.
 		
 		We prove necessity now. For the sake of contradiction, assume that there exists $\ell^*,z$ such that $\nabla V(z) A_{\ell^*} z >0 $. Then, choose $\rho_1,..,\rho_s \ge \varepsilon$ with $\rho_{\ell^*}$ chosen sufficiently large such that $\sum_{\ell} \rho_\ell \nabla V(z) A_\ell z >0$. Then, let $z(t)$ be a trajectory of the LDI with $z(0)=z$ and $\dot z(0)=\sum_{\ell} \rho_\ell A_\ell z \in A_\varepsilon z$. Then, since $\nabla V(z)$ exists, we have $\tfrac d{dt}V(z(0))= \sum_{\ell} \rho_\ell \nabla V(z) A_\ell z>0$; a contradiction.
 	\end{proof}
 	
 	 {	\subsection{Proof of Lemma \ref{lem2}}
 		\begin{proof}
 			Fix $A \in \mathcal{A}_{\varepsilon}$.	Let $\varphi(t;z_0,A)$ be a trajectory of $\dot z(t) = A z(t), z(0)=z_0$. We start with necessity. Since $V$ is non-increasing (in time) then $V(\varphi(t;z_0,A))\le V(z_0)$ for all $z_0$. Since $\varphi(t;z_0,A)=e^{A t} z_0$ and $z_0 \in \mathbb R^N$ is arbitrary we get $V( e^{A t}  z ) \leq V(z)$ for all $z$ as required. \\
 			For sufficiency, we  write $\dot V$ as follows: (where $z(t)=\varphi(t;z_0,A)$)
 			\begin{align*} \dot V(z(t))&= \limsup_{h \to 0^+} \frac{ V(z(t+h))- V(z(t))}{h} \\ &= \limsup_{h \to 0^+} \frac{ V(e^{A h }z(t))- V(z(t))}{h} \le 0,\end{align*}
 			as required.
 	\end{proof}}
 	\subsection{Proof of Lemma \ref{siphonrevisited}}
 	\begin{proof}
 		We show the contrapositive of the result. Take any point $y \in \partial [0,+\infty)^n$, such that $\{ S_i: y_i = 0 \}$ is not a siphon.
 		Hence, there exists $j \in \mathcal{S}$, such that $f_j (y)>0$. Fix $\varepsilon>0$ and $\delta >0$ such that $f_j (x) \geq \delta$ for all $x \in \mathbb{B}_{\varepsilon} (y)$ and $B_{\varepsilon} (y) \cap K = \emptyset$.
 		Denote by $M>0$ any upper bound of $|f(x)|$ in $B_{\varepsilon} (y)$. 
 		Consider any solution $\varphi(t,\xi)$ with $\xi \in K$.
 		If, at any time $t_{\varepsilon/2}$ it enters the ball $B_{\varepsilon/2}$, then by continuity there exists
 		\begin{equation}
 			t_\varepsilon := \max \{ t  \leq t_{\varepsilon/2}: |\varphi(t, \xi) - y|= \varepsilon \}.
 		\end{equation}  
 		Moreover, \begin{align*}\frac \varepsilon 2 \le |\varphi(t_{\varepsilon/2},\xi) - \varphi(t_\varepsilon,\xi) | &= \left |\int_{t_{\varepsilon/2}}^{t_\varepsilon} f(\varphi(\tau,\xi))~d\tau\right | \\ & \le (t_{\varepsilon/2} - t_{\varepsilon}) M. \end{align*}
 		
 		Hence, $(t_{\varepsilon/2} - t_{\varepsilon})\geq \varepsilon / 2M$, and the following holds for the $j$-th component of the solution at time
 		$t_{\varepsilon/2}$:
 		\begin{align*}
 			\varphi_j (t_{\varepsilon/2},\xi) &= \varphi_j (t_{\varepsilon}, \xi )
 			+ \int_{t_{\varepsilon}}^{t_{\varepsilon/2}} f_j( \varphi(\tau,\xi) ) \, d \tau \\ &
 			\geq \varphi_j (t_{\varepsilon}, \xi )
 			+ \delta ( t_{\varepsilon/2} - t_{\varepsilon}  ) \geq \delta \varepsilon / 2M.
 		\end{align*}
 		Moreover, for as long as $\varphi(t,\xi)$ belongs to $B_{\varepsilon}(y)$ we see that
 		the derivative $f_j( \varphi(t,\xi))$ is going to be non-negative.
 		As a consequence, $|\varphi(t,\xi)-y| \geq \min \{ \varepsilon/2, \delta \varepsilon/ 2M \}$, for all $t \geq 0$.
 		This shows that $y \notin \omega(K)$ and concludes the proof of the Lemma.  \end{proof}

 	{ 

 	}
 	 
 \end{document}